\newtheorem{theorem}{Theorem}
\newtheorem{corollary}[theorem]{Corollary}
\newtheorem{lemma}[theorem]{Lemma}
\newtheorem{proposition}[theorem]{Proposition}
\theoremstyle{definition}
\newtheorem{remark}[theorem]{Remark}
\DeclareMathOperator{\im}{im}
\title{Permutation Polynomials of Finite Fields of Even Characteristic From Character Sums}
\author{Ruikai Chen\textsuperscript{1}\and Sihem Mesnager\textsuperscript{2,3,4}}
\date{\small\textsuperscript{1}School of Mathematical Sciences, South China Normal University, 510631 Guangzhou, China\\\textsuperscript{2}Department of Mathematics, University of Paris VIII, 93526 Saint-Denis, France\\\textsuperscript{3}Laboratory Analysis, Geometry and Applications, LAGA, University Sorbonne Paris Nord, CNRS, UMR 7539, 93430 Villetaneuse, France\\\textsuperscript{4}Telecom Paris, Polytechnic institute of Paris, 91120 Palaiseau, France\\Emails: \href{mailto:chen.rk@outlook.com}{chen.rk@outlook.com}\quad\href{mailto:smesnager@univ-paris8.fr}{smesnager@univ-paris8.fr}}
\begin{document}

\maketitle

\begin{abstract}
In this paper, we investigate permutation polynomials over the finite field $\mathbb F_{q^n}$ with $q=2^m$, focusing on those in the form $\mathrm{Tr}(Ax^{q+1})+L(x)$, where $A\in\mathbb F_{q^n}^*$ and $L$ is a $2$-linear polynomial over $\mathbb F_{q^n}$. By calculating certain character sums, we characterize these permutation polynomials and provide additional constructions.

\textit{Keywords:} character sum, finite field, permutation polynomial
\end{abstract}

\section{Introduction}

A polynomial over the finite field $\mathbb F_q$ whose associated map is a permutation of $\mathbb F_q$ is called a permutation polynomial of $\mathbb F_q$. Given a map from $\mathbb F_q$ to itself, the corresponding polynomial can be retrieved using the Lagrange interpolation formula. In general, there exists a one-to-one correspondence between all polynomials over $\mathbb F_q$ of degree less than $q$ and those maps on $\mathbb F_q$.

Permutation polynomials, especially those over finite fields of characteristic $2$, have significant applications in cryptography, coding theory, combinatorics, etc. The problem of finding permutation polynomials in certain forms has gained substantial attention in recent decades. Specifically, permutation polynomials that are so-called Dembowski-Ostrom polynomials over finite fields of characteristic $2$ have been studied in \cite{blokhuis2001permutations}, such as $x^{2^{2k}+1}+a^{2^k+1}x^{2^k+1}+ax^2$ over $\mathbb F_{2^{3k}}$ for a positive integer $k$. Some other related permutation polynomials, to name a few, are listed as follows:
\begin{itemize}
\item $x^{2^{2k}+1}+x^{2^k+1}+ax$ over $\mathbb F_{2^{3k}}$ in \cite{tu2014several};
\item $x^{2^s+1}+x^{2^{s-1}+1}+x$ over $\mathbb F_{2^k}$ with $s\in\{1,2\}$ in \cite{bhattacharya2017some};
\item $x^rh(x^{2^k-1})$ over $\mathbb F_{2^{2k}}$ in \cite{gupta2016some,zha2017further}.
\end{itemize}
See \cite{ding2015permutation,li2017new,wang2018six} and the references therein for more examples.

Most permutation polynomials studied in the literature have specific forms with several coefficients. In this paper, we aim to provide a systematic treatment for a wider range of polynomials. By introducing certain character sums, we derive criteria to determine whether a given polynomial is a permutation polynomial. This approach may lead to new perspectives for studying permutation polynomials and their potential applications.

Throughout this paper, we consider the finite field $\mathbb F_{q^n}$ as an extension of $\mathbb F_q$ with $q=2^m$. For the polynomial ring $\mathbb F_{q^n}[x]$, when polynomials are viewed as maps on $\mathbb F_{q^n}$, let $x^{q^n}=x$ by abuse of notation. In this sense, those polynomials over $\mathbb F_{q^n}$ representing linear endomorphisms of $\mathbb F_{q^n}/\mathbb F_q$ are called $q$-linear polynomials, where each term has degree a power of $q$. Also, denote by $x^\frac12$ the inverse of the automorphism $x^2$ of $\mathbb F_{q^n}$, which is $x^\frac{q^n}2$ written as a polynomial over $\mathbb F_{q^n}$. Denote by $\mathrm{Tr}_k$ the trace map of $\mathbb F_{q^n}$ over $\mathbb F_{q^k}$ if $k$ divides $n$, and let $\mathrm{Tr}=\mathrm{Tr}_1$.

We will deal with the polynomial
\[\mathrm{Tr}(Ax^{q+1})+L(x)\]
for $A\in\mathbb F_{q^n}^*$ and $L$ a $2$-linear polynomial over $\mathbb F_{q^n}$. In Section \ref{pre}, some preliminaries on linear structures of finite fields and character sums will be introduced. Afterwards, we study those polynomials, providing necessary and sufficient conditions for them to be permutation polynomials of $\mathbb F_{q^n}$, along with various constructions. These will be presented in Section \ref{odd} for the case of odd $n$ and in Section \ref{even} for even $n$.

\section{Preliminaries}\label{pre}

Consider $\mathbb F_{q^n}$ as a vector space over $\mathbb F_q$. Let $W$ be a subspace of $\mathbb F_{q^n}$ with a basis $\beta_1,\dots,\beta_r$. The map defined by
\[\alpha\mapsto(\mathrm{Tr}(\beta_1\alpha),\dots,\mathrm{Tr}(\beta_r\alpha))\qquad(\alpha\in\mathbb F_{q^n})\]
is a homomorphism from $\mathbb F_{q^n}$ onto $\mathbb F_q^r$, and its kernel, denoted by $W^\perp$, does not depend on the choice of basis. In particular, when $r=n$, that is an isomorphism from $\mathbb F_{q^n}$ to $\mathbb F_q^n$.

Given a linear endomorphism $L$ of $\mathbb F_{q^n}/\mathbb F_q$ of the form
\[L(x)=\sum_{i=0}^{n-1}a_ix^{q^i},\]
define $L^\prime$ as
\[L^\prime(x)=\sum_{i=0}^{n-1}(a_ix)^{q^{-i}},\]
called the adjoint of $L$. It is the unique map on $\mathbb F_{q^n}$ such that $\mathrm{Tr}(\alpha L(\beta))=\mathrm{Tr}(L^\prime(\alpha)\beta)$ for all $\alpha,\beta\in\mathbb F_{q^n}$, where $\mathrm{Tr}$ acts as a linear form of $\mathbb F_{q^n}/\mathbb F_q$. In fact, for every fixed $\alpha\in\mathbb F_{q^n}$, $L^\prime(\alpha)$ is uniquely determined by $\mathrm{Tr}(L^\prime(\alpha)\beta)$ with $\beta$ ranging over a basis of $\mathbb F_{q^n}/\mathbb F_q$ via the aforementioned isomorphism. In addition, the adjoints of linear endomorphisms have the following properties:
\begin{enumerate}
\item[(i)]$(L^\prime)^\prime=L$;
\item[(ii)]$(L+L_0)^\prime=L^\prime+L_0^\prime$, for another linear endomorphism $L_0$ of $\mathbb F_{q^n}/\mathbb F_q$;
\item[(iii)]$(L_0\circ L)^\prime=L^\prime\circ L_0^\prime$;
\item[(iv)]$(L^{-1})^\prime=(L^\prime)^{-1}$ if $L$ has an inverse map $L^{-1}$ on $\mathbb F_{q^n}$;
\item[(v)]$|\ker L|=|\ker L^\prime|$.
\end{enumerate}
Here (i) and (ii) are obvious from the definition and (iii) follows from the identity
\[\mathrm{Tr}(\alpha L^\prime(L_0^\prime(\beta)))=\mathrm{Tr}(L(\alpha)L_0^\prime(\beta))=\mathrm{Tr}(L_0(L(\alpha))\beta)\]
for all $\alpha,\beta\in\mathbb F_{q^n}$. This immediately implies $(L^{-1})^\prime\circ L^\prime=(L\circ L^{-1})^\prime$ in the case of (iv). The last one is guaranteed by the fact that $\ker L^\prime=(\im L)^\perp$ ($\im L$ denotes the image of $L$).

Since $q$ and $n$ are arbitrary, the above properties are still valid concerning general $2$-linear polynomials over $\mathbb F_{q^n}$ and the trace map of $\mathbb F_{q^n}$ over $\mathbb F_2$.

Given $A\in\mathbb F_{q^n}^*$ and an arbitrary $2$-linear polynomial $L$ over $\mathbb F_{q^n}$, we are interested in the number of roots of $\mathrm{Tr}(Ax^{q+1})+L(x)$ in $\mathbb F_{q^n}$. The above discussion indicates that $\chi(\alpha L(\beta))=\chi(L^\prime(\alpha)\beta)$ for all $\alpha,\beta\in\mathbb F_{q^n}$, where $\chi$ is the canonical additive character of $\mathbb F_{q^n}$. Let
\[\mathcal S(a,b)=\sum_{w\in\mathbb F_{q^n}}\chi(aw^{q+1}+bw)\]
for $a,b\in\mathbb F_{q^n}$. Then the number of roots of $\mathrm{Tr}(Ax^{q+1})+L(x)$ in $\mathbb F_{q^n}$ is
\[q^{-n}\sum_{w\in\mathbb F_{q^n}}\sum_{u\in\mathbb F_{q^n}}\chi(u(\mathrm{Tr}(Aw^{q+1})+L(w))),\]
for the inner sum is $q^n$ if $\mathrm{Tr}(Aw^{q+1}+L(w))=0$ and is $0$ otherwise. Hence, the desired number is
\[q^{-n}\sum_{u\in\mathbb F_{q^n}}\sum_{w\in\mathbb F_{q^n}}\chi(A\mathrm{Tr}(u)w^{q+1}+L^\prime(u)w)=q^{-n}\sum_{u\in\mathbb F_{q^n}}\mathcal S(A\mathrm{Tr}(u),L^\prime(u)).\]
These character sums for odd $n$ and for even $n$ will be treated differently.

\section{The case of odd $n$}\label{odd}

For odd $n$, one has
\[q^n-1=(q+1-1)^n-1\equiv(-1)^n-1\equiv-2\pmod{q+1},\]
so $\gcd(q+1,q^n-1)=\gcd(2,q+1)=1$. This means for every $A\in\mathbb F_{q^n}^*$ we may write
\[\mathrm{Tr}(Ax^{q+1})+L(x)=\mathrm{Tr}((\alpha x)^{q+1})+L(x)\]
for some $\alpha\in\mathbb F_{q^n}^*$. Then consider $\mathrm{Tr}(x^{q+1})+L(x)$ without loss of generality. It is a permutation polynomial of $\mathbb F_{q^n}$ if and only if for every $\beta\in\mathbb F_{q^n}$ the map
\[\mathrm{Tr}((x+\beta)^{q+1})+L(x+\beta)+\mathrm{Tr}(\beta^{q+1})+L(\beta)\]
has only one root in $\mathbb F_{q^n}$, and that is
\[\mathrm{Tr}(x^{q+1})+\mathrm{Tr}(\beta x^q+\beta^qx)+L(x)=\mathrm{Tr}(x^{q+1})+\mathrm{Tr}\big(\big(\beta^{q^{n-1}}+\beta^q\big)x\big)+L(x).\]
Note that $x^{q^{n-1}}+x^q$ maps $\mathbb F_{q^n}$ onto $\ker\mathrm{Tr}$ with kernel $\mathbb F_q$, so $\beta^{q^{n-1}}+\beta^q$ runs through $\ker\mathrm{Tr}$ as $\beta$ runs through $\mathbb F_{q^n}$. Then $\mathrm{Tr}(x^{q+1})+L(x)$ is a permutation polynomial of $\mathbb F_{q^n}$ if and only if
\[q^{-n}\sum_{u\in\mathbb F_{q^n}}\mathcal S(\mathrm{Tr}(u),t\mathrm{Tr}(u)+L^\prime(u))=1\]
for every $t\in\ker\mathrm{Tr}$, where the left side is indeed the number of roots of $\mathrm{Tr}(x^{q+1})+\mathrm{Tr}(tx)+L(x)$ in $\mathbb F_{q^n}$. As for the character sum $\mathcal S(a,b)$ with $a,b\in\mathbb F_{q^n}^*$, there exists a unique element $c$ in $\mathbb F_{q^n}$ satisfying $c^{q+1}=a$, so that $\mathcal S(a,b)=\mathcal S(1,bc^{-1})$, which can be determined by the following lemma.

\begin{lemma}[{\cite[Theorem 4.2, Lemma 4.3]{coulter1999evaluation}}]
Suppose that $n$ is odd. For $b\in\mathbb F_{q^n}^*$, if $b+1=\beta^{q^2}+\beta$ for some $\beta\in\mathbb F_{q^n}$, then $\mathcal S(1,b)=\chi(\beta^{q+1}+\beta)\mathcal S(1,1)$ with $\mathcal S(1,1)^2=q^{n+1}$; otherwise, $\mathcal S(1,b)=0$.
\end{lemma}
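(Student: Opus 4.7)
The plan is to compute $\mathcal{S}(1,b)^2$ first (exploiting that $\mathcal{S}(1,b)$ is real since $\chi$ takes values in $\{\pm1\}$), which pins down when $\mathcal{S}(1,b)$ vanishes and determines its modulus, and then, when nonzero, to recover the sign by an explicit translation of the summation variable.

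For the squared modulus, I would expand $\mathcal{S}(1,b)^2$ as a double sum over $w,y\in\mathbb{F}_{q^n}$, make the translation $y=w+z$, and use characteristic $2$: the cross $w^{q+1}$ contributions cancel and the sum separates as
\[\mathcal{S}(1,b)^2=\sum_{z\in\mathbb{F}_{q^n}}\chi(z^{q+1}+bz)\sum_{w\in\mathbb{F}_{q^n}}\chi(w^qz+wz^q).\]
The inner sum is a trace-character sum of a linear form in $w$; by the adjoint identity from Section~\ref{pre} it equals $q^n$ when $z^{q^{n-1}}+z^q=0$ and vanishes otherwise. For odd $n$, the condition $z^{q^{n-1}}=z^q$ forces $z\in\mathbb{F}_{q^2}\cap\mathbb{F}_{q^n}=\mathbb{F}_q$. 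For $z\in\mathbb{F}_q$ one has $z^{q+1}=z^2$ and $\chi(z^2)=\chi(z)$, so a tower-law calculation collapses the outer sum to $q^{n+1}$ when $\Tr(1+b)=0$ and to $0$ otherwise. Specialising to $b=1$ already yields $\mathcal{S}(1,1)^2=q^{n+1}$.

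To match the non-vanishing condition with the stated hypothesis, I would verify that for odd $n$ the image of $\beta\mapsto\beta^{q^2}+\beta$ on $\mathbb{F}_{q^n}$ equals $\ker\Tr$: its kernel is $\mathbb{F}_{q^2}\cap\mathbb{F}_{q^n}=\mathbb{F}_q$ so the image has $\mathbb{F}_q$-dimension $n-1$, and the image lies in $\ker\Tr$ because $\Tr(\beta^{q^2})=\Tr(\beta)$ by Frobenius invariance of $\Tr$. Thus $\mathcal{S}(1,b)\neq0$ precisely when $1+b=\beta^{q^2}+\beta$ for some $\beta\in\mathbb{F}_{q^n}$.

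The final and most delicate step --- and the one I expect to be the main obstacle --- is to pin down the sign, i.e.\ to establish $\mathcal{S}(1,b)=\chi(\beta^{q+1}+\beta)\mathcal{S}(1,1)$. I would substitute $w=v+\beta^q$ in $\mathcal{S}(1,b)$ and expand $(v+\beta^q)^{q+1}+b(v+\beta^q)$ in characteristic $2$. The quadratic piece in $v$ is $v^{q+1}$; the linear-in-$v$ piece, after rewriting $\chi(v^q\beta^q)=\chi(v\beta)$ by Frobenius invariance, becomes $\chi(v(\beta+\beta^{q^2}+b))=\chi(v)$ thanks to the defining relation $\beta^{q^2}+\beta=1+b$, so that summing over $v$ reproduces $\mathcal{S}(1,1)$. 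The constant that remains is $\chi(\beta^{q^2+q}+b\beta^q)$, and the crux is simplifying it to $\chi(\beta^{q+1}+\beta)$: Frobenius gives $\chi(\beta^{q^2+q})=\chi(\beta^{q+1})$, and rewriting $b\beta^q=(\beta^{q^2}+\beta+1)\beta^q=\beta^{q^2+q}+\beta^{q+1}+\beta^q$ together with one more Frobenius application collapses the constant to $\chi(\beta^{q+1}+\beta)$, as required.
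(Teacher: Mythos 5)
Your argument is correct and complete. Note that the paper itself offers no proof of this lemma --- it is imported verbatim from Coulter's evaluation of these Weil sums --- so there is nothing internal to compare against; what you have written is a self-contained elementary derivation. Each step checks out: squaring the (real-valued) sum and substituting $y=w+z$ does isolate the inner sum $\sum_w\chi\big(w\big(z^{q^{n-1}}+z^q\big)\big)$, whose vanishing condition $z^{q^2}=z$ forces $z\in\mathbb F_q$ precisely because $n$ is odd, giving $\mathcal S(1,b)^2=q^n\sum_{z\in\mathbb F_q}\chi((1+b)z)$ and hence the dichotomy governed by $\Tr(1+b)$; the identification of the image of $\beta^{q^2}+\beta$ with $\ker\Tr$ by the dimension count is exactly what translates that dichotomy into the stated hypothesis; and the shift $w=v+\beta^q$ correctly reduces the linear part to $\chi(v(\beta^{q^2}+\beta+b))=\chi(v)$ and the constant $\chi\big(\beta^{q^2+q}+b\beta^q\big)$ to $\chi(\beta^{q+1}+\beta)$ after substituting $b=\beta^{q^2}+\beta+1$ and one Frobenius twist. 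This ``square the sum, then complete the square'' route is the standard one and is arguably more direct than Coulter's original treatment, which proceeds through a chain of reductions for general $a$; the only cosmetic remark is that your argument also shows, as a byproduct, that $\chi(\beta^{q+1}+\beta)$ is independent of the choice of $\beta$ solving $\beta^{q^2}+\beta=b+1$, which is needed for the statement to be well posed.
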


We are now ready to characterize the polynomial $\mathrm{Tr}(x^{q+1})+L(x)$ and ascertain whether it is a permutation polynomial of $\mathbb F_{q^n}$.

\begin{theorem}\label{t}
Let $n$ be odd and
\[L(x)=\sum_{i=0}^{m-1}L_i\big(x^{2^i}\big),\]
where $L_i$ is a $q$-linear polynomial over $\mathbb F_{q^n}$. Then $\mathrm{Tr}(x^{q+1})+L(x)$ is a permutation polynomial of $\mathbb F_{q^n}$ if and only if $L_i(1)\in\mathbb F_q$ for $0\le i<m$,
\[\ell(x)=\sum_{i=0}^{m-1}(L_i(1)x)^{2^{m-i}}+x^{2^{m-1}}\]
is a permutation polynomial of $\mathbb F_q$, and $\ker\mathrm{Tr}\cap\ker L^\prime=\{0\}$.
\end{theorem}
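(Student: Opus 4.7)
The plan is to evaluate the character sum
\[N(t) = q^{-n}\sum_{u\in\mathbb F_{q^n}}\mathcal S(\Tr u,\, t\Tr u+L'(u))\]
for the number of roots of $\Tr(x^{q+1})+\Tr(tx)+L(x)$, reducing it to a count on $W:=L^{-1}(\mathbb F_q)=\{w\in\mathbb F_{q^n}:L(w)\in\mathbb F_q\}$. Splitting by $a:=\Tr u\in\mathbb F_q$: the $a=0$ contribution is $q^n|\ker\Tr\cap\ker L'|$ since $\mathcal S(0,b)=q^n[b=0]$. For $a\in\mathbb F_q^*$, I interchange the $u$ and $w$ sums, use the adjoint identity $\chi(L'(u)w)=\chi(uL(w))$, and apply the orthogonality $\sum_{v\in\ker\Tr}\chi(v\alpha)=q^{n-1}[\alpha\in\mathbb F_q]$ (the character $v\mapsto\chi(v\alpha)$ is trivial on $\ker\Tr$ iff $\alpha\in\mathbb F_q$) to collapse the inner $u$-sum onto $W$. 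Then $\sum_{a\in\mathbb F_q}\chi(ay)=q[\Tr y=0]$ handles the $a$-sum; using $\Tr(L(w))=L(w)$ for $w\in W$ (since $L(w)\in\mathbb F_q$ and $n$ is odd, so $\Tr|_{\mathbb F_q}=\mathrm{id}$), I obtain the master formula
\[N(t) = K + N_W(t) - q^{-1}|W|,\]
where $K:=|\ker\Tr\cap\ker L'|$, $N_W(t):=|\{w\in W:P(w)+\Tr(tw)=0\}|$, and $P(x):=\Tr(x^{q+1})+L(x)$. Hence $P$ is a PP iff $N_W(t)\equiv 1-K+q^{-1}|W|$ on $\ker\Tr$.

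For sufficiency, assume the three conditions. The condition $L_i(1)\in\mathbb F_q$ gives $\mathbb F_q\subseteq W$, since $L(y)=\sum_iL_i(1)y^{2^i}\in\mathbb F_q$ for $y\in\mathbb F_q$. Partition $W$ into $|W|/q$ cosets of $\mathbb F_q$. For $w=c+y$ with $c\in W$, $y\in\mathbb F_q$, the cross-terms satisfy $\Tr(c^qy+cy^q)=y\Tr(c^q+c)=0$ and $\Tr(t(c+y))=\Tr(tc)$ (since $y\Tr(t)=0$), so $P(c+y)+\Tr(t(c+y))=P(c)+\Tr(tc)+y^2+L(y)$. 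With $P(c)+\Tr(tc)\in\mathbb F_q$, the equation $y^2+L(y)=P(c)+\Tr(tc)$ has exactly one solution $y\in\mathbb F_q$ by the assumption that $\ell$ is a PP of $\mathbb F_q$, because the map $y\mapsto y^2+L(y)$ on $\mathbb F_q$ has $\mathbb F_2$-adjoint $\ell$ and property (v) gives equal kernel sizes. Hence each coset contributes one to $N_W(t)$, so $N_W(t)=|W|/q$ for every $t$; together with $K=1$ from the third condition, $N(t)=1$.

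For necessity, assume $P$ is a PP and sum the required identity $N_W(t)=1-K+q^{-1}|W|$ over $t\in\ker\Tr$. For fixed $w$, $|\{t\in\ker\Tr:\Tr(tw)=P(w)\}|$ equals $q^{n-1}[P(w)=0]$ when $w\in\mathbb F_q$ (where $\Tr(tw)=0$) and $q^{n-2}$ otherwise (the $\mathbb F_q$-form $t\mapsto\Tr(tw)$ is surjective on $\ker\Tr$ when $w\notin\mathbb F_q$). With $A:=|W\cap\mathbb F_q|$ and $M:=|\{w\in W\cap\mathbb F_q:P(w)=0\}|$, this gives
\[q^{n-1}M + q^{n-2}(|W|-A) = q^{n-1}\bigl(1-K+|W|/q\bigr),\]
which simplifies to $M = 1 - K + A/q$. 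Since $P(0)=0$ and $0\in W\cap\mathbb F_q$, $M\geq 1$; combined with $A\leq q$ and $K\geq 1$, this forces $K=1$, $A=q$, and $M=1$. Here $A=q$ is equivalent to each $L_i(1)\in\mathbb F_q$ via the $\mathbb F_2$-isomorphism between $(a_0,\dots,a_{m-1})\in\mathbb F_{q^n}^m$ and $\mathbb F_2$-linear maps $\mathbb F_q\to\mathbb F_{q^n}$ given by $y\mapsto\sum_ia_iy^{2^i}$; and $M=1$ says $P|_{\mathbb F_q}\colon y\mapsto y^2+L(y)$ is injective, hence a PP of $\mathbb F_q$, whose $\mathbb F_2$-adjoint $\ell$ is then also a PP by property (v).

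The main obstacle is the character-sum reduction in the first paragraph: interchanging the summations and applying the adjoint identity together with the two orthogonality relations in the right order must be choreographed carefully. Once the master formula $N(t)=K+N_W(t)-q^{-1}|W|$ is established, both directions reduce to elementary counting.
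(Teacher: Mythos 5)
Your proof is correct, and it takes a genuinely different route from the paper's. The paper evaluates the double sum $\sum_{t\in\ker\mathrm{Tr}}\sum_{u}\mathcal S(\Tr(u),t\Tr(u)+L^\prime(u))$ by invoking Coulter's explicit evaluation of the Weil sums $\mathcal S(1,b)$ (its Lemma 1), arriving at $|\ker\mathrm{Tr}\cap\ker L^\prime|q^{2n-1}+|\ker T\setminus\ker\mathrm{Tr}|q^n$ with $T(x)=\Tr(L^\prime(x))+\Tr(x)^\frac12$, and then reads the three conditions off the requirement $|\ker T\setminus\ker\mathrm{Tr}|=0$. Your master formula $N(t)=K+N_W(t)-q^{-1}|W|$ uses only the adjoint identity and two orthogonality relations and never evaluates a quadratic Weil sum; I verified it independently: for fixed $a=\Tr(u)\in\mathbb F_q^*$ the inner $u$-sum collapses to $q^{n-1}$ times the indicator of $L(w)\in\mathbb F_q$ times $\chi(u_0L(w))$ for any $u_0$ of trace $a$, and since $L(w)\in\mathbb F_q$ and $n$ is odd the three character values combine into a character of $\mathbb F_q$ evaluated at $a(P(w)+\Tr(tw))$, so the $a$-sum produces $q[P(w)+\Tr(tw)=0]-1$ exactly as you claim. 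Your endgame is also sound: $g(y)=y^2+L(y)$ on $\mathbb F_q$ is the adjoint of $\ell$, so property (v) transfers bijectivity, and averaging the identity over $t\in\ker\mathrm{Tr}$ forces $M=1$, $K=1$, $A=q$ simultaneously from $M\ge1$, $K\ge1$, $A\le q$. As for what each approach buys: yours is elementary and self-contained for the odd case---it removes the dependence on \cite{coulter1999evaluation} entirely and the master formula also yields the paper's $N$-to-$1$ remark for free---whereas the paper's computation is the template that transfers to Section 4, where $n$ is even, the relevant kernel becomes $\ker\mathrm{Tr}_2$, and Coulter's evaluations are genuinely needed. The two arguments meet in the middle: the paper's $T$ equals $\ell\circ\Tr$ precisely when all $L_i(1)\in\mathbb F_q$, mirroring your observation that $P$ restricted to each coset of $\mathbb F_q$ inside $W=L^{-1}(\mathbb F_q)$ acts through the adjoint of $\ell$.
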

\begin{proof}
First of all, for $t\in\ker\mathrm{Tr}$ we have
\[\sum_{u\in\ker\mathrm{Tr}}\mathcal S(\mathrm{Tr}(u),t\mathrm{Tr}(u)+L^\prime(u))=\sum_{u\in\ker\mathrm{Tr}}\mathcal S(0,L^\prime(u))=|\ker\mathrm{Tr}\cap\ker L^\prime|q^n.\]
Then consider $u\in\mathbb F_{q^n}$ such that $\mathrm{Tr}(u)\ne0$. Since $\big(\mathrm{Tr}(u)^\frac12\big)^{q+1}=\big(\mathrm{Tr}(u)^\frac12\big)^2=\mathrm{Tr}(u)$, we have
\[\mathcal S(\mathrm{Tr}(u),t\mathrm{Tr}(u)+L^\prime(u))=\mathcal S\big(1,t\mathrm{Tr}(u)^\frac12+L^\prime(u)\mathrm{Tr}(u)^{-\frac12}\big).\]
Also, this character sum is nonzero if and only if
\[\mathrm{Tr}(L^\prime(u))\mathrm{Tr}(u)^{-\frac12}+1=\mathrm{Tr}\big(t\mathrm{Tr}(u)^\frac12+L^\prime(u)\mathrm{Tr}(u)^{-\frac12}+1\big)=0;\]
that is,
\[\mathrm{Tr}(L^\prime(u))+\mathrm{Tr}(u)^\frac12=0\qquad(\mathrm{Tr}(u)\ne0).\]
Let $T$ be a linear endomorphism of $\mathbb F_{q^n}/\mathbb F_2$ defined by $T(x)=\mathrm{Tr}(L^\prime(x))+\mathrm{Tr}(x)^\frac12$, which is
\[T(x)=\sum_{i=0}^{m-1}\mathrm{Tr}(L_i^\prime(x))^{2^{-i}}+\mathrm{Tr}(x)^\frac12=\sum_{i=0}^{m-1}\mathrm{Tr}(L_i(1)x)^{2^{m-i}}+\mathrm{Tr}(x)^{2^{m-1}}.\]
Next, observe that
\[\begin{split}&\mathrel{\phantom{=}}\sum_{t\in\ker\mathrm{Tr}}\sum_{u\in\ker T\setminus\ker\mathrm{Tr}}\mathcal S(\mathrm{Tr}(u),t\mathrm{Tr}(u)+L^\prime(u))\\&=\sum_{u\in\ker T\setminus\ker\mathrm{Tr}}\sum_{t\in\ker\mathrm{Tr}}\mathcal S\big(1,t\mathrm{Tr}(u)^\frac12+L^\prime(u)\mathrm{Tr}(u)^{-\frac12}\big)\\&=\sum_{u\in\ker T\setminus\ker\mathrm{Tr}}q^{-1}\sum_{\beta\in\mathbb F_{q^n}}\mathcal S(1,\beta^{q^2}+\beta+1)\\&=\sum_{u\in\ker T\setminus\ker\mathrm{Tr}}q^{-1}\sum_{\beta\in\mathbb F_{q^n}}\chi(\beta^{q+1}+\beta)\mathcal S(1,1)\\&=|\ker T\setminus\ker\mathrm{Tr}|q^{-1}\mathcal S(1,1)^2\\&=|\ker T\setminus\ker\mathrm{Tr}|q^n,\end{split}\]
where the second equality holds because $t\mathrm{Tr}(u)^\frac12+L^\prime(u)\mathrm{Tr}(u)^{-\frac12}$ runs through $\ker\mathrm{Tr}+1$ as $t$ runs through $\ker\mathrm{Tr}$ for every $u\in\ker T\setminus\ker\mathrm{Tr}$. Moreover, it has been seen that
\[\sum_{t\in\ker\mathrm{Tr}}\sum_{u\in\ker\mathrm{Tr}}\mathcal S(\mathrm{Tr}(u),t\mathrm{Tr}(u)+L^\prime(u))=|\ker\mathrm{Tr}\cap\ker L^\prime|q^{2n-1}.\]
Altogether, we get
\[q^{-n}\sum_{t\in\ker\mathrm{Tr}}\sum_{u\in\mathbb F_{q^n}}\mathcal S(\mathrm{Tr}(u),t\mathrm{Tr}(u)+L^\prime(u))=|\ker\mathrm{Tr}\cap\ker L^\prime|q^{n-1}+|\ker T\setminus\ker\mathrm{Tr}|.\]
If $\mathrm{Tr}(x^{q+1})+L(x)$ is a permutation polynomial of $\mathbb F_{q^n}$, then the left side of the above equation is $|\ker\mathrm{Tr}|=q^{n-1}$, while $|\ker\mathrm{Tr}\cap\ker L^\prime|>0$, which implies $|\ker\mathrm{Tr}\cap\ker L^\prime|=1$ and $|\ker T\setminus\ker\mathrm{Tr}|=0$.

Suppose first that $L_i(1)\notin\mathbb F_q$ for some $i$. In this case,
\[T(x^q+x)=\sum_{i=0}^{m-1}\mathrm{Tr}(L_i(1)(x^q+x))^{2^{m-i}}=\sum_{i=0}^{m-1}\mathrm{Tr}\big(\big(L_i(1)^{q^{n-1}}+L_i(1)\big)x\big)^{2^{m-i}},\]
where at least one summand on the right side is nonzero, and thus the sum as a polynomial over $\mathbb F_{q^n}$ of degree less than $q^n$ is nonzero (look at the degree of each term). Then $T(\ker\mathrm{Tr})$ is nontrivial, and $0<|\ker T\cap\ker\mathrm{Tr}|<|\ker\mathrm{Tr}|=q^{n-1}$. Since $\im T\subseteq\mathbb F_q$, we see that $|\ker T|\ge q^{n-1}$ and
\[|\ker T\setminus\ker\mathrm{Tr}|=|\ker T|-|\ker T\cap\ker\mathrm{Tr}|>0.\]
In this case, $\mathrm{Tr}(x^{q+1})+L(x)$ is not a permutation polynomial of $\mathbb F_{q^n}$.

Suppose now that $L_i(1)\in\mathbb F_q$ for $0\le i<m$, so that
\[T(x)=\sum_{i=0}^{m-1}(L_i(1)\mathrm{Tr}(x))^{2^{m-i}}+\mathrm{Tr}(x)^{2^{m-1}}=\ell(\mathrm{Tr}(x)).\]
If $\ell$ has a nonzero root in $\mathbb F_q$, then clearly $|\ker T\setminus\ker\mathrm{Tr}|>0$. Otherwise,
\[\sum_{u\in\mathbb F_{q^n}}\mathcal S(\mathrm{Tr}(u),t\mathrm{Tr}(u)+L^\prime(u))=|\ker\mathrm{Tr}\cap\ker L^\prime|q^n\]
for every $t\in\ker\mathrm{Tr}$, which completes the proof.
\end{proof}

\begin{remark}
Assume that $\ell$ is a permutation polynomial of $\mathbb F_q$. Then
\[|\ker\mathrm{Tr}\cap\ker L^\prime|=q^{-n}\sum_{u\in\mathbb F_{q^n}}\mathcal S(\mathrm{Tr}(u),t\mathrm{Tr}(u)+L^\prime(u))\]
whenever $t\in\ker\mathrm{Tr}$. Recall that it is the number of roots of
\[\mathrm{Tr}((x+\beta)^{q+1})+L(x+\beta)+\mathrm{Tr}(\beta^{q+1})+L(\beta)\]
in $\mathbb F_{q^n}$ for arbitrary $\beta\in\mathbb F_{q^n}$. Hence, the map $\mathrm{Tr}(x^{q+1})+L(x)$ is $N$-to-$1$ on $\mathbb F_{q^n}$ with $N=|\ker\mathrm{Tr}\cap\ker L^\prime|$; i.e., the inverse image of every element in $\mathbb F_{q^n}$ consists of either $0$ or $N$ elements. Such maps are also of great interest in terms of applications of finite fields. This provides an alternative construction of $N$-to-$1$ maps on finite fields, by determining $|\ker\mathrm{Tr}\cap\ker L^\prime|$ for some certain $L$ as below.
\end{remark}

First, we investigate a relatively simple case where $L$ is a monomial or binomial. For $L(x)=ax^{2^k}+bx^{2^l}$ that is nonzero over $\mathbb F_{q^n}$ with nonnegative integers $k,l$, it is well known that $\ker L=\{0\}$ if $a^\frac{q^n-1}{2^d-1}\ne b^\frac{q^n-1}{2^d-1}$, and $\ker L=\alpha\mathbb F_{2^d}$ otherwise, where $\alpha$ is a nonzero element in $\ker L$ and $d=\gcd(l-k,mn)$. To see this, suppose first that $ab\ne0$ and $k<l$. Then $L(\alpha)=0$ for $\alpha\in\mathbb F_{q^n}^*$ if and only if $\big(\alpha^{2^k}\big)^{2^{l-k}-1}=ab^{-1}$; such $\alpha$ exists in $\mathbb F_{q^n}^*$ if and only if
\[(ab^{-1})^\frac{q^n-1}{2^d-1}=1,\]
where $d=\gcd(l-k,mn)$ so that $2^d-1=\gcd(2^{l-k}-1,q^n-1)$. In this case,
\[\ker L=\alpha\mathbb F_{2^{l-k}}\cap\mathbb F_{q^n}=\alpha\mathbb F_{2^d}.\]
One can easily verify that the conclusion is valid even if $ab=0$ or $k\ge l$. This leads to the following results.

\begin{proposition}
Let $L(x)=ax^{2^k}+bx^{2^l}$ be nonzero with $a,b\in\mathbb F_{q^n}$ and nonnegative integers $k,l$. Denote $d=\gcd(l-k,mn)$, $e=\gcd(l-k,m)$ and $r=\frac de$. If $a^\frac{q^n-1}{2^d-1}\ne b^\frac{q^n-1}{2^d-1}$, then $|\ker\mathrm{Tr}\cap\ker L^\prime|=1$. Assuming $a^\frac{q^n-1}{2^d-1}=b^\frac{q^n-1}{2^d-1}$, we have
\[|\ker\mathrm{Tr}\cap\ker L^\prime|=\begin{cases}2^d&\text{if }a+b\ne0\text{ or }a+b=\mathrm{Tr}_r(a^{-1})=0,\\q^{r-1}&\text{otherwise}\end{cases}\]
when $k\equiv l\pmod m$ with $a+b\in\mathbb F_q$, and
\[|\ker\mathrm{Tr}\cap\ker L^\prime|=\begin{cases}2^d&\text{if }a^\frac{q-1}{2^e-1}\ne b^\frac{q-1}{2^e-1}\text{ or }\frac nr\equiv0\pmod2,\\2^{d-e}&\text{otherwise}\end{cases}\]
when $k\not\equiv l\pmod m$ with $a,b\in\mathbb F_q$.
\end{proposition}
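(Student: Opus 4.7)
My plan is to first dispose of the easy case and then treat the two cases separately. When $a^{(q^n-1)/(2^d-1)}\neq b^{(q^n-1)/(2^d-1)}$ the discussion preceding the proposition forces $\ker L=\{0\}$, hence $\ker L'=\{0\}$ by property (v), so $|\ker\Tr\cap\ker L'|=1$. Otherwise the same kind of direct computation yields $\ker L'=x_0\mathbb F_{2^d}$ for some nonzero $x_0$, and the task becomes understanding $\Tr|_{\ker L'}$.

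In Case 1 ($k\equiv l\pmod m$), one has $d=mr$ and $\mathbb F_{2^d}=\mathbb F_{q^r}$; writing $L(x)=G(x^{2^{k_0}})$ with $k_0=k\bmod m$ and $G$ a $q$-linear polynomial gives $L'(x)=G'(x)^{2^{-k_0}}$, so $\ker L'=\ker G'$ is a one-dimensional $\mathbb F_{q^r}$-subspace of $\mathbb F_{q^n}$. If $a=b$, solving $L'(\delta)=0$ directly yields $(a\delta)^{2^{l-k}}=a\delta$, so $\ker L'=a^{-1}\mathbb F_{q^r}$; the transitivity $\Tr=\Tr_{\mathbb F_{q^r}/\mathbb F_q}\circ\Tr_r$ then gives $\Tr(a^{-1}\zeta)=\Tr_{\mathbb F_{q^r}/\mathbb F_q}(\zeta\,\Tr_r(a^{-1}))$ for $\zeta\in\mathbb F_{q^r}$, which is identically $0$ when $\Tr_r(a^{-1})=0$ and otherwise a nonzero $\mathbb F_q$-linear form on $\mathbb F_{q^r}$ with kernel of size $q^{r-1}$, giving the two claimed values $2^d$ and $q^{r-1}$. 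If $a+b\in\mathbb F_q^*$, I would exploit the adjoint identity at $\beta=1$: for $\delta\in\ker L'$, $\Tr_{\mathbb F_{q^n}/\mathbb F_2}((a+b)\delta)=\Tr_{\mathbb F_{q^n}/\mathbb F_2}(L'(\delta))=0$. Replacing $\delta$ by $\zeta\delta\in\ker L'$ for $\zeta\in\mathbb F_{q^r}$ and using that $(a+b)\zeta$ sweeps out $\mathbb F_{q^r}$ gives $\Tr_{\mathbb F_{q^n}/\mathbb F_2}(\xi\delta)=0$ for every $\xi\in\mathbb F_{q^r}$, which by the decomposition $\Tr_{\mathbb F_{q^n}/\mathbb F_2}=\Tr_{\mathbb F_{q^r}/\mathbb F_2}\circ\Tr_r$ and nondegeneracy of the $\mathbb F_2$-trace pairing on $\mathbb F_{q^r}$ forces $\Tr_r(\delta)=0$; hence $\ker L'\subseteq\ker\Tr_r\subseteq\ker\Tr$ and $|\ker\Tr\cap\ker L'|=2^d$.

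In Case 2 ($k\not\equiv l\pmod m$ with $a,b\in\mathbb F_q$), both $L$ and $L'$ commute with the Frobenius $\phi_q:x\mapsto x^q$, so $\ker L'$ is $\phi_q$-invariant. Raising $L'(x)=0$ to the $2^l$-th power reduces the adjoint equation to $M(x)=a^{2^u}x^{2^u}+bx=0$ with $u=l-k$, giving $\ker L'=x_0\mathbb F_{2^d}$ with $x_0^{2^u-1}=b/a^{2^u}$. I set $\sigma:=x_0^{q^r-1}$; $\phi_q$-invariance puts $\sigma\in\mathbb F_{2^d}$, iterating yields $x_0^{q^{jr}}=\sigma^jx_0$, and $x_0^{q^n}=x_0$ forces $\sigma^{n/r}=1$. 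A careful exponent reduction modulo $q-1$ shows that $\sigma=1$ is equivalent to $a^{(q-1)/(2^e-1)}=b^{(q-1)/(2^e-1)}$, and that in this case $x_0$ may be chosen in $\mathbb F_q$ itself. When $\sigma\neq1$, the identity $\sum_{j=0}^{n/r-1}\sigma^j=0$ gives $\Tr_r(x_0)=x_0\sum_j\sigma^j=0$, and then $\Tr(x_0\zeta)=\Tr_{\mathbb F_{q^r}/\mathbb F_q}(\zeta\,\Tr_r(x_0))=0$ for every $\zeta\in\mathbb F_{2^d}\subseteq\mathbb F_{q^r}$, so $|\ker\Tr\cap\ker L'|=2^d$. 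When $\sigma=1$ and $x_0\in\mathbb F_q$, $\Tr(x_0\zeta)=x_0\Tr(\zeta)$, and a Frobenius-orbit count on $\zeta\in\mathbb F_{2^d}$ under $\phi_q$ produces $\Tr(\zeta)=(n/r)\,\Tr_{\mathbb F_{2^d}/\mathbb F_{2^e}}(\zeta)$, which vanishes identically when $n/r$ is even and otherwise surjects onto $\mathbb F_{2^e}$ with a kernel of size $2^{d-e}$.

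The main obstacle I foresee is the exponent arithmetic behind the equivalence $\sigma=1\iff a^{(q-1)/(2^e-1)}=b^{(q-1)/(2^e-1)}$ and the parallel choice $x_0\in\mathbb F_q$; it depends on the containment $\mathbb F_{2^d}\subseteq\mathbb F_{q^r}$ (which needs $e\mid m$) together with reductions such as $(q^r-1)/(2^d-1)\equiv(q-1)/(2^e-1)\pmod{q-1}$ for $a,b\in\mathbb F_q^*$, all flowing from $\gcd(l-k,m)=e$ and $\gcd((l-k)/e,n)=r$.
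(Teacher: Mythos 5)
Your argument is correct and reaches all the stated counts, but in two places it takes a genuinely different route from the paper. For $k\equiv l\pmod m$ with $a+b\in\mathbb F_q^*$, you derive $\ker L^\prime\subseteq\ker\Tr$ from the adjoint pairing at $\beta=1$ together with the $\mathbb F_{q^r}$-module structure of $\ker L^\prime$ (in fact proving the stronger containment $\ker L^\prime\subseteq\ker\Tr_r$), whereas the paper simply computes $\Tr(L^\prime(x))^{2^l}=(a+b)\Tr(x)$; both are fine. The more substantial divergence is in the case $k\not\equiv l\pmod m$: you organize the analysis around the invariant $\sigma=x_0^{q^r-1}$ and the identity $\Tr_r(x_0)=x_0\sum_j\sigma^j$, which forces you to prove the equivalence $\sigma=1\iff a^{(q-1)/(2^e-1)}=b^{(q-1)/(2^e-1)}$ — the exponent arithmetic you flag as the main obstacle. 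That arithmetic does go through (the key facts are $\gcd(r,m/e)=1$, hence $\gcd(er,m)=e$ and $(q^r-1)/(2^d-1)\equiv(q-1)/(2^e-1)\pmod{q-1}$, and $\sigma$ is independent of the choice of $x_0$ because $\zeta^{q^r-1}=1$ for $\zeta\in\mathbb F_{2^d}\subseteq\mathbb F_{q^r}$), so your route is sound, but the paper sidesteps it entirely: since $a,b\in\mathbb F_q$, one has $\Tr(L^\prime(x))=L^\prime(\Tr(x))$, so $\ker L^\prime\cap\mathbb F_q=\{0\}$ immediately gives $\ker L^\prime\subseteq\ker\Tr$, and otherwise a generator $\gamma$ of $\ker L^\prime$ can be taken in $\mathbb F_q^*$, reducing everything to $\Tr$ restricted to $\mathbb F_{2^d}$. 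The final Frobenius-orbit count $\Tr|_{\mathbb F_{2^d}}=\frac nr\Tr_{\mathbb F_{2^d}/\mathbb F_{2^e}}$ and the treatment of the subcase $a=b$ via $\Tr_r(a^{-1})$ are the same in both arguments. In short: your proof buys a uniform, more structural description of how Frobenius acts on a generator of $\ker L^\prime$, at the cost of a nontrivial gcd computation that the paper's commutation trick avoids.
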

\begin{proof}
Since
\[ne\mathbb Z=n((l-k)\mathbb Z+m\mathbb Z)=n(l-k)\mathbb Z+mnZ\subseteq d\mathbb Z,\]
we find that $r$ divides $n$. If $a^\frac{q^n-1}{2^d-1}\ne b^\frac{q^n-1}{2^d-1}$, then $\ker L^\prime=\{0\}$; otherwise, there exists a nonzero element $\gamma\in\mathbb F_{q^n}$ with $L^\prime(\gamma)=0$. Therefore, it remains to consider the latter case, where $\ker L^\prime=\gamma\mathbb F_{2^d}$.

Suppose $k\equiv l\pmod m$ with $a+b\in\mathbb F_q$. If $a+b\ne0$, then
\[\mathrm{Tr}(L^\prime(x))^{2^l}=\mathrm{Tr}\big((ax)^{2^{l-k}}+bx\big)=\mathrm{Tr}((a+b)x)=(a+b)\mathrm{Tr}(x),\]
which means $\ker L^\prime\subseteq\ker\mathrm{Tr}$, and $|\ker\mathrm{Tr}\cap\ker L^\prime|=|\ker L^\prime|=2^d$. If $a+b=0$, then $\ker L^\prime=a^{-1}\mathbb F_{2^d}=a^{-1}\mathbb F_{q^r}$. Note that $\im\mathrm{Tr}_r=\mathbb F_{q^r}$, and $\mathrm{Tr}(a^{-1}\mathrm{Tr}_r(x))=\mathrm{Tr}(\mathrm{Tr}_r(a^{-1})x)$. If $\mathrm{Tr}_r(a^{-1})=0$, then $\mathrm{Tr}(a^{-1}\im\mathrm{Tr}_r)=\{0\}$ and $a^{-1}\mathbb F_{q^r}\subseteq\ker\mathrm{Tr}$. Otherwise, there exists some $\beta\in\mathbb F_{q^r}^*$ such that $\mathrm{Tr}(a^{-1}\beta)\ne0$, and
\[\mathbb F_q=\mathrm{Tr}(a^{-1}\beta)\mathbb F_q=\mathrm{Tr}(a^{-1}\beta\mathbb F_q)\subseteq\mathrm{Tr}(a^{-1}\mathbb F_{q^r})\subseteq\mathbb F_q;\]
this implies
\[|\ker\mathrm{Tr}\cap a^{-1}\mathbb F_{q^r}|=\frac{|a^{-1}\mathbb F_{q^r}|}{|\mathrm{Tr}(a^{-1}\mathbb F_{q^r})|}=q^{r-1}.\]

Suppose $k\not\equiv l\pmod m$ with $a,b\in\mathbb F_q$. If $a^\frac{q-1}{2^e-1}\ne b^\frac{q-1}{2^e-1}$, then $\ker L^\prime\cap\mathbb F_q=\{0\}$, which implies that $\ker L^\prime\subseteq\ker\mathrm{Tr}$ since
\[\mathrm{Tr}(L^\prime(x))=(a\mathrm{Tr}(x))^{2^{-k}}+(b\mathrm{Tr}(x))^{2^{-l}}=L^\prime(\mathrm{Tr}(x)).\]
Now assuming $a^\frac{q-1}{2^e-1}=b^\frac{q-1}{2^e-1}$, we have $\ker L^\prime=\gamma\mathbb F_{2^d}$ for some $\gamma\in\mathbb F_q^*$. Observe that $\mathrm{Tr}(\gamma\mathbb F_{2^d})=\gamma\mathrm{Tr}(\mathbb F_{2^d})$, so $\ker\mathrm{Tr}\cap\ker L^\prime=\gamma(\ker\mathrm{Tr}\cap\mathbb F_{2^d})$. The additive order of $m$ modulo $d$ is
\[\frac d{\gcd(m,d)}=\frac d{\gcd(m,l-k,mn)}=\frac de,\]
and $x^q=x^{2^m}$ as an automorphism of $\mathbb F_{2^d}$ has order $r=\frac de$ with fixed field $\mathbb F_{2^e}$.  Then the map $\mathrm{Tr}$ restricted to $\mathbb F_{2^d}$ becomes
\[\frac nr\big(x+x^q+\dots+x^{q^{r-1}}\big)=\frac nr\big(x+x^{2^e}+\dots+x^{2^{d-e}}\big),\]
representing the zero map if $\frac nr$ is even, and the trace map of $\mathbb F_{2^d}$ over $\mathbb F_{2^e}$ otherwise. The proof is then complete.
\end{proof}

\begin{corollary}
Let $n$ be odd, $L(x)=ax^{2^k}+bx^{2^l}$ be nonzero with $a,b\in\mathbb F_{q^n}$ and $k,l$ be integers with $0\le k,l<mn$. Denote $d=\gcd(l-k,mn)$ and $s=\gcd(k-1,m)$. When $k\equiv l\pmod m$, $\mathrm{Tr}(x^{q+1})+L(x)$ is a permutation polynomial of $\mathbb F_{q^n}$ if and only if
\begin{itemize}
\item $a+b\in\mathbb F_q^*$, $(a+b)^\frac{q-1}{2^s-1}\ne1$ and $a^\frac{q^n-1}{2^d-1}\ne b^\frac{q^n-1}{2^d-1}$, or
\item $a+b=0$, $d=m$ and $\mathrm{Tr}(a^{-1})\ne0$.
\end{itemize}
When $k\not\equiv l\pmod m$ and $l\equiv1\pmod m$, it is a permutation polynomial of $\mathbb F_{q^n}$ if and only if
\begin{itemize}
\item $a,b\in\mathbb F_q$, $a^\frac{q-1}{2^s-1}\ne(b+1)^\frac{q-1}{2^s-1}$, and
\item either $a^\frac{q^n-1}{2^d-1}\ne b^\frac{q^n-1}{2^d-1}$ or $d$ divides $m$ and $a^\frac{q-1}{2^d-1}=b^\frac{q-1}{2^d-1}$.
\end{itemize} 
In particular, in the case that $b=0$, it is a permutation polynomial of $\mathbb F_{q^n}$ if and only if $a\in\mathbb F_q^*$ and $a^\frac{q-1}{2^{s-1}}\ne1$.
\end{corollary}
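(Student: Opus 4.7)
The plan is to invoke Theorem \ref{t} together with the preceding Proposition. By Theorem \ref{t} it suffices to verify three independent conditions on $L$: (i) $L_i(1)\in\mathbb F_q$ for every $i$; (ii) the auxiliary polynomial $\ell$ is a permutation of $\mathbb F_q$; and (iii) $\ker\Tr\cap\ker L^\prime=\{0\}$. For $L(x)=ax^{2^k}+bx^{2^l}$ I would first decompose $L=\sum_iL_i(x^{2^i})$ according to the residues of $k,l$ modulo $m$: when $k\equiv l\equiv i_0\pmod m$ the only nontrivial component gives $L_{i_0}(1)=a+b$, so (i) becomes $a+b\in\mathbb F_q$; when $k\not\equiv l\pmod m$ with $l\equiv1\pmod m$ the two monomials land in distinct components with $L_i(1)\in\{a,b,0\}$, so (i) becomes $a,b\in\mathbb F_q$.

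For (ii), in each case $\ell$ turns out to be an $\mathbb F_q$-binomial of the form $c_1x^{2^{a_1}}+c_2x^{2^{a_2}}$, so the existence of a nonzero root in $\mathbb F_q^*$ reduces to a cyclic-group computation in $\mathbb F_q^*$. Using the identity $\gcd(2^j-1,2^m-1)=2^{\gcd(j,m)}-1$ together with the fact that, under the stated residue hypothesis, $\gcd(k-l,m)$ equals $s=\gcd(k-1,m)$, one converts (ii) into $(a+b)^{(q-1)/(2^s-1)}\ne1$ in the first case with $a+b\ne0$ (with (ii) automatic when $a+b=0$, since then $\ell(x)=x^{2^{m-1}}$ is a permutation of $\mathbb F_q$), and into $a^{(q-1)/(2^s-1)}\ne(b+1)^{(q-1)/(2^s-1)}$ in the second case. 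A minor point is verifying that the condition $c_i^{(q-1)/(2^s-1)}=1$ may be rewritten in terms of $a$ or $b+1$: this holds because the binary exponents appearing in the $c_i$ are units modulo $q-1$ and hence stabilize the subgroup of $(2^s-1)$-th roots of unity in $\mathbb F_q^*$.

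Finally I would read off $|\ker\Tr\cap\ker L^\prime|$ from the Proposition and enforce that it equal $1$. In the first case with $a+b\in\mathbb F_q^*$ the Proposition gives either $1$ or $2^d\ge2$, so (iii) forces the subcase $a^{(q^n-1)/(2^d-1)}\ne b^{(q^n-1)/(2^d-1)}$. In the first case with $a+b=0$ one has $e=m$, hence $r=d/m$, and the value $q^{r-1}$ equals $1$ exactly when $d=m$, combined with $\Tr(a^{-1})\ne0$. In the second case one must either land in the trivial subcase or satisfy $a^{(q-1)/(2^e-1)}=b^{(q-1)/(2^e-1)}$ with $n/r$ odd and $d=e$; since $n$ is odd and $d=e$ forces $r=1$, the $n/r$-odd hypothesis is automatic, while $d=e$ is equivalent to $d\mid m$ (using that $e\mid d$ always), which yields the stated disjunction. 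The $b=0$ subclaim follows by specialization: $L$ is a monomial and $\ker L^\prime=\{0\}$ whenever $a\ne0$, making (iii) automatic. The main bookkeeping hurdle is reconciling the three-way split in the Proposition with the disjunctive form in the corollary, which in particular requires the implication $a^{(q-1)/(2^d-1)}=b^{(q-1)/(2^d-1)}\Rightarrow a^{(q^n-1)/(2^d-1)}=b^{(q^n-1)/(2^d-1)}$ for $a,b\in\mathbb F_q$.
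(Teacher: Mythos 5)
Your proposal is correct and follows essentially the same route as the paper: the paper's proof likewise just combines Theorem \ref{t} with the preceding proposition, computes $\ell(x)=((a+b)x)^{2^{-k}}+x^{2^{-1}}$ (resp.\ $(ax)^{2^{-k}}+((b+1)x)^{2^{-1}}$), and reads off when $\ell$ permutes $\mathbb F_q$ and when $|\ker\mathrm{Tr}\cap\ker L^\prime|=1$. One slip in your justification of step (ii): the claim that ``under the stated residue hypothesis $\gcd(k-l,m)=\gcd(k-1,m)$'' is false in the first case, where $k\equiv l\pmod m$ gives $\gcd(k-l,m)=m$; the quantity that actually governs the kernel of $\ell$ is the gap between the two exponents of $\ell$ itself, namely $\gcd((-k)-(-1),m)=\gcd(k-1,m)=s$, via $\gcd(2^{k-1}-1,q-1)=2^s-1$ and the fact that $2^{k-1}\equiv1\pmod{2^s-1}$. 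Since you nevertheless stated the correct final conditions in terms of $s$, and the remaining bookkeeping (including $r\mid n$ forcing $n/r$ odd, $d=e\Leftrightarrow d\mid m$, and the implication $a^{(q-1)/(2^d-1)}=b^{(q-1)/(2^d-1)}\Rightarrow a^{(q^n-1)/(2^d-1)}=b^{(q^n-1)/(2^d-1)}$ for $a,b\in\mathbb F_q$) is handled correctly, the argument goes through once that justification is repaired.
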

\begin{proof}
With the last proposition, it remains to determine under what condition the corresponding polynomial $\ell$ from Theorem \ref{t} is a permutation polynomial of $\mathbb F_q$. If $k\equiv l\pmod m$, then $\ell(x)=((a+b)x)^{2^{-k}}+x^{2^{-1}}$, which permutes $\mathbb F_q$ if and only if $a+b\in\mathbb F_q$ and $(a+b)^\frac{q-1}{2^s-1}\ne1$. If $k\not\equiv l\pmod m$ and $l\equiv1\pmod m$, then $\ell(x)=(ax)^{2^{-k}}+((b+1)x)^{2^{-1}}$, which permutes $\mathbb F_q$ if and only if $a,b\in\mathbb F_q$ and $a^\frac{q-1}{2^s-1}\ne(b+1)^\frac{q-1}{2^s-1}$.
\end{proof}

Consider a polynomial $L(x)=ax+bx^q+cx^{q^2}$ over $\mathbb F_{q^n}$ with $n=3$. It follows from {\cite[Proposition 4.4]{wu2013linearized}} that $|\ker L|=q^{n-r}$, where $r$ is the rank of the matrix
\[\begin{pmatrix}a&b&c\\c^q&a^q&b^q\\b^{q^2}&c^{q^2}&a^{q^2}\end{pmatrix}\]
with determinant $a^{q^2+q+1}+b^{q^2+q+1}+c^{q^2+q+1}+\mathrm{Tr}\big(ab^qc^{q^2}\big)$. This allows us to get $|\ker\mathrm{Tr}\cap\ker L^\prime|$ directly from the coefficients of $L$. If $L(1)\in\mathbb F_q^*$, then
\[\mathrm{Tr}(L^\prime(x))=\mathrm{Tr}(L(1)x)=L(1)\mathrm{Tr}(x),\]
and thus $\ker L^\prime\subseteq\ker\mathrm{Tr}$ and $\ker\mathrm{Tr}\cap\ker L^\prime=\ker L^\prime$. Another special case is discussed below.

\begin{proposition}
Let $n=3$ and $L(x)=ax+bx^q+cx^{q^2}$ be a polynomial over $\mathbb F_{q^n}$ with $L(1)=0$. Then
\[|\ker\mathrm{Tr}\cap\ker L^\prime|=\begin{cases}1&\text{if }\mathrm{Tr}(a^{q+1}+a^qb+b^{q+1})\ne0,\\q^2&\text{if }a+b^{q^2}=a+c^q=0,\\q&\text{otherwise}.\end{cases}\]
\end{proposition}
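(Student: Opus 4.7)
My plan is to split by the $\mathbb F_q$-dimension $d$ of $\ker L'$, which by property (v) equals $\dim\ker L$ and so is at least $1$ since $L(1)=0$ puts $1$ in $\ker L$. Substituting $c=a+b$, one checks that every $2\times 2$ minor of the coefficient matrix
\[
M=\begin{pmatrix}a&c^q&b^{q^2}\\ b&a^q&c^{q^2}\\ c&b^q&a^{q^2}\end{pmatrix}
\]
(whose rows encode $L'(\alpha)=0$ and its Frobenius conjugates) is a Frobenius conjugate of $D:=a^{q+1}+a^qb+b^{q+1}$, so by the cited Wu--Liu result $d=3-\operatorname{rank}M\ge 2$ iff $D=0$.

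If $d=3$, then $L'=0$ forces $a=b=c=0$, so $a+b^{q^2}=a+c^q=0$ and $|K|=|\ker\Tr|=q^2$. If $d=2$, both $\ker L'$ and $\ker\Tr$ are $\mathbb F_q$-hyperplanes of $\mathbb F_{q^3}$; they coincide iff $L'=\mu\Tr$ for some $\mu\in\mathbb F_{q^3}^*$ (take $\mu=L'(\beta)/\Tr(\beta)$ for any $\beta\notin\ker\Tr$), which by coefficient comparison is equivalent to $a=c^q=b^{q^2}$, i.e., to $a+c^q=a+b^{q^2}=0$. In that subcase $|K|=q^2$. Otherwise the two distinct hyperplanes meet in a line, so $|K|=q$; and since $D=0$ here forces $\Tr(D)=0$, this falls under the third line of the proposition.

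The substantive case is $d=1$, where $\ker L'=\mathbb F_q\alpha$ for some nonzero $\alpha$ and $|K|\in\{1,q\}$, equal to $q$ iff $\Tr(\alpha)=0$. I will prove $\Tr(\alpha)=0\iff\Tr(D)=0$. Writing $L'(\alpha)=0$ and its $q$th Frobenius conjugate as a $2\times 2$ system in $(\alpha,\alpha^q)$ with $\alpha^{q^2}$ on the right-hand side, the determinant is $D\ne 0$, so Cramer's rule gives $\alpha=f\alpha^{q^2}$ and $\alpha^q=g\alpha^{q^2}$ with
\[Df=a^qb^{q^2}+c^{q+q^2},\qquad Dg=ac^{q^2}+b^{q^2+1}.\]
Summing, $\Tr(\alpha)=(f+g+1)\alpha^{q^2}$, so $\Tr(\alpha)=0$ iff $D(f+g+1)=0$. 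The crux is the identity $D(f+g+1)=\Tr(D)$: expanding the left side using $c=a+b$, the two copies of $a^qb^{q^2}$ cancel in characteristic $2$ and the remaining nine monomials regroup as $\Tr(a^{q+1})+\Tr(a^qb)+\Tr(b^{q+1})=\Tr(D)$. Hence $\Tr(\alpha)=(\Tr(D)/D)\alpha^{q^2}$, giving $|K|=q$ or $1$ according as $\Tr(D)$ vanishes or not. The main obstacle is precisely this polynomial identity; the remainder is dimension counting together with the characterization of when $L'$ is an $\mathbb F_{q^3}$-scalar multiple of $\Tr$.
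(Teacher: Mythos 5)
Your argument is correct, and it reaches the answer by a genuinely different route than the paper. The paper composes with $x^q+x$ (which maps $\mathbb F_{q^3}$ onto $\ker\mathrm{Tr}$ with kernel $\mathbb F_q$), so that $|\ker L^\prime(x^q+x)|=q\,|\ker\mathrm{Tr}\cap\ker L^\prime|$, and then applies the Wu--Liu rank formula once to the $3\times3$ matrix of this single composite map; after eliminating a redundant row and column, the whole classification is read off from one $2\times2$ determinant, which equals $\Tr(a^{q+1}+a^qb+b^{q+1})$. You instead apply the rank formula to $L^\prime$ itself, stratify by $d=\dim_{\mathbb F_q}\ker L^\prime$, and in the generic stratum $d=1$ solve for the generator $\alpha$ of $\ker L^\prime$ by Cramer's rule, where the decisive step is the polynomial identity $D(f+g+1)=\Tr(D)$ (which I checked, as well as the claim that all nine $2\times2$ minors of $M$ lie in $\{D,D^q,D^{q^2}\}$ after substituting $c=a+b$, and the identification of the $d=2$ coincidence case with $L^\prime=\mu\Tr$). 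The paper's route is more uniform, avoiding the case split and the explicit solve; yours is longer but yields more: an explicit expression for the generator of $\ker L^\prime$ in terms of the coefficients, and the clean characterization that $\ker L^\prime=\ker\mathrm{Tr}$ exactly when $a=c^q=b^{q^2}$. If you write this up, spell out (or at least invoke the Frobenius symmetry to reduce) the verification of all nine minors rather than leaving it at ``one checks.''
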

\begin{proof}
Consider
\[L^\prime(x^q+x)=\big(a+b^{q^2}\big)x+(a+c^q)x^q+\big(b^{q^2}+c^q\big)x^{q^2},\]
whose kernel has cardinality
\[\frac{q^3}{L^\prime(\ker\mathrm{Tr})}=q^3\frac{|\ker\mathrm{Tr}\cap\ker L^\prime|}{|\ker\mathrm{Tr}|}=q|\ker\mathrm{Tr}\cap\ker L^\prime|.\]
Its corresponding matrix
\[\begin{pmatrix}a+b^{q^2}&a+c^q&b^{q^2}+c^q\\b+c^{q^2}&a^q+b&a^q+c^{q^2}\\a^{q^2}+c&b^q+c&a^{q^2}+b^q\end{pmatrix}\]
is equivalent to
\[\begin{pmatrix}a+b^{q^2}&a+c^q&0\\b+c^{q^2}&a^q+b&0\\0&0&0\end{pmatrix}\]
by eliminating the last row and the last column. A direct calculation yields
\[\begin{split}&\mathrel{\phantom{=}}\begin{vmatrix}a+b^{q^2}&a+c^q\\b+c^{q^2}&a^q+b\end{vmatrix}\\&=a^{q+1}+ab+a^qb^{q^2}+b^{q^2+1}+ab+ac^{q^2}+bc^q+c^{q^2+q}\\&=\mathrm{Tr}(a^{q+1}+a^qb+b^{q+1}),\end{split}\]
and the desired result follows.
\end{proof}

\begin{corollary}
Let $n=3$ and $L(x)=\big(ax+bx^q+cx^{q^2}\big)^{2^k}$ be a polynomial over $\mathbb F_{q^n}$ for an integer $k$ with $0\le k<m$. Then $\mathrm{Tr}(x^{q+1})+L(x)$ is a permutation polynomial of $\mathbb F_{q^n}$ if and only if $(a+b+c)^\frac{q-1}{2^s-1}\ne1$ with $s=\gcd(k-1,m)$, and either $a+b+c\in\mathbb F_q^*$ and $a^{q^2+q+1}+b^{q^2+q+1}+c^{q^2+q+1}+\mathrm{Tr}\big(ab^qc^{q^2}\big)\ne0$, or $a+b+c=0$ and $\mathrm{Tr}(a^{q+1}+a^qb+b^{q+1})\ne0$.
\end{corollary}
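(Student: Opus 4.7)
The plan is to combine Theorem~\ref{t} with the preceding proposition, peeling off the Frobenius twist $\cdot^{2^k}$ in the process.

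First, I would put $L$ into the form prescribed by Theorem~\ref{t}. Expanding gives $L(x)=a^{2^k}x^{2^k}+b^{2^k}x^{q\cdot 2^k}+c^{2^k}x^{q^2\cdot 2^k}$, so only $L_k(y)=a^{2^k}y+b^{2^k}y^q+c^{2^k}y^{q^2}$ is nonzero and $L_k(1)=(a+b+c)^{2^k}$. The condition $L_i(1)\in\mathbb F_q$ for all $i$ is therefore just $a+b+c\in\mathbb F_q$. A direct computation of the associated polynomial yields
\[\ell(x)=(a+b+c)x^{2^{m-k}}+x^{2^{m-1}},\]
a binomial over $\mathbb F_q$. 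Applying the kernel criterion for binomials stated just before the main proposition (with $d=\gcd(k-1,m)=s$), $\ell$ permutes $\mathbb F_q$ if and only if $(a+b+c)^{(q-1)/(2^s-1)}\ne1$; the degenerate case $a+b+c=0$ is absorbed since $0\ne1$ and $\ell$ reduces to the bijection $x^{2^{m-1}}$.

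Second, I would reduce the third requirement $\ker\Tr\cap\ker L^\prime=\{0\}$ to the corresponding statement for $M(x)=ax+bx^q+cx^{q^2}$. Writing $L=\phi^k\circ M$ with $\phi(x)=x^2$ and using properties (i)--(iii) of the adjoint, I get $(\phi^k)^\prime=\phi^{-k}$ and therefore $L^\prime=M^\prime\circ\phi^{-k}$, whence $\ker L^\prime=\phi^k(\ker M^\prime)$. Because $\Tr(x^2)=\Tr(x)^2$, the Frobenius $\phi$ stabilizes $\ker\Tr$, so
\[\ker\Tr\cap\ker L^\prime=\phi^k(\ker\Tr\cap\ker M^\prime),\]
and in particular the two intersections have the same cardinality.

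Third, I would invoke the remark just before and the preceding proposition applied to $M$ itself. If $a+b+c\in\mathbb F_q^*$, the remark gives $\ker\Tr\cap\ker M^\prime=\ker M^\prime$, so triviality is equivalent to $|\ker M|=1$, which by the cited result of Wu is equivalent to the nonvanishing of $a^{q^2+q+1}+b^{q^2+q+1}+c^{q^2+q+1}+\Tr(ab^qc^{q^2})$. If $a+b+c=0$, the preceding proposition directly supplies the criterion $\Tr(a^{q+1}+a^qb+b^{q+1})\ne0$. Assembling the three conditions produces the stated equivalence.

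The main obstacle is the reduction step: recognizing that the outer $2^k$-th power on $L$ can be stripped off because $\ker\Tr$ is $\phi$-invariant, so that the previous results (which are formulated only for $q$-linear $L$) become applicable. Everything else is bookkeeping with already-established formulas.
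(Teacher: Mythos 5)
Your argument is correct and follows exactly the route the paper intends (the corollary is stated there without an explicit proof): Theorem~\ref{t} supplies the three conditions, the Wu--Liu determinant criterion handles $a+b+c\in\mathbb F_q^*$ via $\ker\mathrm{Tr}\cap\ker M^\prime=\ker M^\prime$, and the preceding proposition handles $a+b+c=0$. Your explicit stripping of the outer $2^k$-power, using $L^\prime=M^\prime\circ\phi^{-k}$ together with the $\phi$-invariance of $\ker\mathrm{Tr}$ so that $|\ker\mathrm{Tr}\cap\ker L^\prime|=|\ker\mathrm{Tr}\cap\ker M^\prime|$, is a detail the paper leaves implicit, and you carry it out correctly.
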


Having studied the polynomial $\mathrm{Tr}(x^{q+1})+L(x)$ in terms of the coefficients of $L$, we will now approach it from another perspective. The conditions in Theorem \ref{t} can be restated in the following way.

\begin{lemma}\label{l}
Let
\[L(x)=\sum_{i=0}^{m-1}L_i\big(x^{2^i}\big),\]
where $L_i$ is a $q$-linear polynomial over $\mathbb F_{q^n}$. Then
\begin{enumerate}
\item[\upshape(i)]$L^\prime(\ker\mathrm{Tr})\subseteq\ker\mathrm{Tr}$ if and only if $L_i(1)\in\mathbb F_q$ for $0\le i<m$, in which case there exists a unique map $l$ on $\mathbb F_q$ such that $\mathrm{Tr}(L^\prime(x))=l(\mathrm{Tr}(x))$, written as
\[l(x)=\sum_{i=0}^{m-1}(L_i(1)x)^{2^{m-i}};\]
\item[\upshape(ii)]if $L^\prime(\ker\mathrm{Tr})\subseteq\ker\mathrm{Tr}$, then $L^\prime$ defines an endomorphism of $\mathbb F_{q^n}/\ker\mathrm{Tr}$ in the obvious way:
\[\ker\mathrm{Tr}+\alpha\mapsto\ker\mathrm{Tr}+L^\prime(\alpha)\qquad(\alpha\in\mathbb F_{q^n}),\]
and it is an automorphism if and only if the above map $l$ is a linear automorphism of $\mathbb F_q/\mathbb F_2$;
\item[\upshape(iii)]$|L^\prime(\ker\mathrm{Tr})|=|\ker\mathrm{Tr}|$ if and only if $\ker\mathrm{Tr}\cap\ker L^\prime=\{0\}$.
\end{enumerate}
\end{lemma}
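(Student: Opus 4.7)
The plan is to base everything on the identity
\[\Tr(L^\prime(x))=\sum_{i=0}^{m-1}\Tr(L_i(1)x)^{2^{m-i}},\]
which is extracted from the chain of equalities in the proof of Theorem~\ref{t}. Granting it, the forward direction of (i) is immediate: when each $L_i(1)$ lies in $\mathbb F_q$, the scalar passes through $\Tr$ and one gets $\Tr(L^\prime(x))=\sum_i(L_i(1)\Tr(x))^{2^{m-i}}=l(\Tr(x))$, which vanishes whenever $\Tr(x)=0$ and simultaneously exhibits the stated closed form of $l$.

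For the converse of (i) I parametrise $\ker\Tr$ by $\alpha=\beta^q+\beta$, $\beta\in\mathbb F_{q^n}$, the surjection already noted just before Theorem~\ref{t}. Using $\Tr(y^q)=\Tr(y)$ and setting $c_i:=L_i(1)^{q^{-1}}+L_i(1)$, a short computation gives $\Tr(L_i(1)(\beta^q+\beta))=\Tr(c_i\beta)$, so the hypothesis $L^\prime(\ker\Tr)\subseteq\ker\Tr$ becomes
\[\sum_{i=0}^{m-1}\Tr(c_i\beta)^{2^{m-i}}=0\qquad\text{for every }\beta\in\mathbb F_{q^n}.\]
Expanding, the $\mathbb F_{q^n}$-monomials of this $2$-linear polynomial take the form $c_i^{2^{mj+m-i}}\beta^{2^{mj+m-i}}$ with $0\le i<m$ and $0\le j<n$, and a short bookkeeping check shows that the exponent $mj+m-i$ hits each value of $\{1,2,\ldots,mn\}$ exactly once. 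After reducing the sole monomial with exponent $2^{mn}$ down to $\beta=\beta^{2^0}$, the expression is a reduced $2$-polynomial of $\mathbb F_{q^n}$-degree less than $q^n$, so its identical vanishing as a function forces every coefficient to be zero; since each coefficient is a power of some $c_i$, this gives $c_i=0$, equivalently $L_i(1)\in\mathbb F_q$, for every $i$. This coefficient-extraction step is the only nontrivial point and the anticipated main obstacle.

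Parts (ii) and (iii) are then short. For (ii), the descent of $L^\prime$ to $\mathbb F_{q^n}/\ker\Tr$ is well defined by the standing hypothesis, and under the canonical $\mathbb F_2$-linear isomorphism $\mathbb F_{q^n}/\ker\Tr\xrightarrow{\Tr}\mathbb F_q$ it corresponds exactly to $l$ by virtue of the relation $\Tr\circ L^\prime=l\circ\Tr$ from part (i); as $l$ is manifestly $\mathbb F_2$-linear from its closed form, the descent is an automorphism if and only if $l$ is an $\mathbb F_2$-linear automorphism of $\mathbb F_q$. For (iii), applying the first isomorphism theorem to the restriction $L^\prime|_{\ker\Tr}\colon\ker\Tr\to\mathbb F_{q^n}$ yields $|\ker\Tr|=|\ker L^\prime\cap\ker\Tr|\cdot|L^\prime(\ker\Tr)|$, so $|L^\prime(\ker\Tr)|=|\ker\Tr|$ precisely when $\ker L^\prime\cap\ker\Tr=\{0\}$.
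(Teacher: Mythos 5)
Your proof is correct and takes essentially the same route as the paper's: the same identity $\Tr(L^\prime(x))=\sum_i\Tr(L_i(1)x)^{2^{m-i}}$, the same parametrization of $\ker\mathrm{Tr}$ as the image of $x^q+x$ followed by coefficient extraction from a reduced $2$-polynomial of degree less than $q^n$, the same transport of the induced endomorphism to $\mathbb F_q$ via the canonical isomorphism $\mathbb F_{q^n}/\ker\mathrm{Tr}\cong\mathbb F_q$ for (ii), and the same first-isomorphism-theorem count for (iii). Your explicit bookkeeping showing that the exponents $mj+m-i$ exhaust $\{1,\dots,mn\}$ without collision (and that the single exponent $mn$ reduces harmlessly to $0$) is just a more careful rendering of the paper's terse ``the right side formally has degree less than $q^n$'' step.
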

\begin{proof}
If $L_i(1)\in\mathbb F_q$ for $0\le i<m$, then apparently $L^\prime(\ker\mathrm{Tr})\subseteq\ker\mathrm{Tr}$ as
\[\mathrm{Tr}(L^\prime(x))=\sum_{i=0}^{m-1}\mathrm{Tr}(L_i(1)x)^{2^{m-i}}=\sum_{i=0}^{m-1}(L_i(1)\mathrm{Tr}(x))^{2^{m-i}}.\]
Conversely, if $L^\prime(\ker\mathrm{Tr})\subseteq\ker\mathrm{Tr}$, then $\mathrm{Tr}(L^\prime(x^q+x))=0$, while
\[\mathrm{Tr}(L^\prime(x^q+x))=\sum_{i=0}^{m-1}\mathrm{Tr}(L_i(1)(x^q+x))^{2^{m-i}}=\sum_{i=0}^{m-1}\mathrm{Tr}\big(\big(L_i(1)^{q^{n-1}}+L_i(1)\big)x\big)^{2^{m-i}},\]
where the right side formally has degree less than $q^n$, and thus $L_i(1)+L_i(1)^q=0$ for $0\le i<m$. The existence and uniqueness of the map $l$ can be easily verified.

Suppose now that $L^\prime(\ker\mathrm{Tr})\subseteq\ker\mathrm{Tr}$. Denote by $\tau$ the canonical isomorphism from $\mathbb F_{q^n}/\ker\mathrm{Tr}$ to $\mathbb F_q$, so that $\tau(\ker\mathrm{Tr}+\alpha)=\mathrm{Tr}(\alpha)$ and
\[\tau(\ker\mathrm{Tr}+L^\prime(\alpha))=\mathrm{Tr}(L^\prime(\alpha))=l(\mathrm{Tr}(\alpha))=(l\circ \tau)(\ker\mathrm{Tr}+\alpha)\]
for arbitrary $\alpha\in\mathbb F_{q^n}$. Accordingly, the map in (ii) is actually $\tau^{-1}\circ l\circ\tau$, well-defined and clearly an endomorphism of $\mathbb F_{q^n}/\ker\mathrm{Tr}$. This proves (ii).

The third statement follows from
\[|\ker\mathrm{Tr}|=|\ker\mathrm{Tr}\cap\ker L^\prime||L^\prime(\ker\mathrm{Tr})|,\]
with $L^\prime$ regarded as a homomorphism from $\ker\mathrm{Tr}$ onto $L^\prime(\ker\mathrm{Tr})$.
\end{proof}

\begin{corollary}\label{c}
Let $n$ be odd. Then $\mathrm{Tr}(x^{q+1})+L(x)$ is a permutation polynomial of $\mathbb F_{q^n}$ if and only if $L^\prime(\ker\mathrm{Tr})=\ker\mathrm{Tr}$ and $L^\prime(x)+x^\frac12$ acts as an automorphism of $\mathbb F_{q^n}/\ker\mathrm{Tr}$.
\end{corollary}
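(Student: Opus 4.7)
The plan is to derive the corollary by translating each of the three conditions in Theorem \ref{t} into statements about the endomorphism of $\mathbb F_{q^n}/\ker\mathrm{Tr}$ provided by Lemma \ref{l}. First, I would combine the first and third conditions of Theorem \ref{t}. The first, $L_i(1)\in\mathbb F_q$ for every $i$, is by Lemma \ref{l}(i) equivalent to the inclusion $L^\prime(\ker\mathrm{Tr})\subseteq\ker\mathrm{Tr}$, and the third, $\ker\mathrm{Tr}\cap\ker L^\prime=\{0\}$, is by Lemma \ref{l}(iii) equivalent to $|L^\prime(\ker\mathrm{Tr})|=|\ker\mathrm{Tr}|$. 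Together they say precisely that $L^\prime$ maps $\ker\mathrm{Tr}$ onto itself, i.e.\ $L^\prime(\ker\mathrm{Tr})=\ker\mathrm{Tr}$.

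Next, assuming $L^\prime(\ker\mathrm{Tr})=\ker\mathrm{Tr}$, I would show that the second condition of Theorem \ref{t} is equivalent to $L^\prime(x)+x^{1/2}$ inducing an automorphism of $\mathbb F_{q^n}/\ker\mathrm{Tr}$. First, $x^{1/2}$ also stabilises $\ker\mathrm{Tr}$, since $\Tr(x^{1/2})=\Tr(x)^{1/2}$ vanishes whenever $\Tr(x)$ does, so $L^\prime(x)+x^{1/2}$ descends to a well-defined $\mathbb F_2$-linear endomorphism $\phi$ of $\mathbb F_{q^n}/\ker\mathrm{Tr}$. Using the canonical isomorphism $\tau\colon\mathbb F_{q^n}/\ker\mathrm{Tr}\to\mathbb F_q$ from the proof of Lemma \ref{l}, the $L^\prime$-summand of $\phi$ is conjugate via $\tau$ to $l$ on $\mathbb F_q$ by Lemma \ref{l}(ii), while the $x^{1/2}$-summand is conjugate to $x\mapsto x^{2^{m-1}}$, because $\tau(\ker\mathrm{Tr}+\alpha^{1/2})=\Tr(\alpha^{1/2})=\Tr(\alpha)^{1/2}=\tau(\ker\mathrm{Tr}+\alpha)^{1/2}$. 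Hence $\tau\circ\phi\circ\tau^{-1}=\ell$ on $\mathbb F_q$, and $\phi$ is an automorphism if and only if $\ell$ is a permutation of $\mathbb F_q$.

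Combining the two reductions recovers the three conditions of Theorem \ref{t} as the pair of conditions in the corollary, which proves the equivalence. The argument is essentially bookkeeping on top of Theorem \ref{t} and Lemma \ref{l}; the only point that demands independent verification is that $x^{1/2}$ preserves $\ker\mathrm{Tr}$ and commutes with $\tau$ as claimed, which is the main (mild) obstacle.
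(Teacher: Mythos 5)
Your proposal is correct and is essentially the argument the paper intends: the corollary is stated without proof precisely because it is the combination of Theorem \ref{t} with Lemma \ref{l}, namely that parts (i) and (iii) of the lemma turn the first and third conditions of the theorem into $L^\prime(\ker\mathrm{Tr})=\ker\mathrm{Tr}$, while conjugating by the canonical isomorphism $\tau$ identifies the induced map on $\mathbb F_{q^n}/\ker\mathrm{Tr}$ with $\ell=l(x)+x^{2^{m-1}}$ on $\mathbb F_q$. Your verification that $\Tr\big(\alpha^\frac12\big)=\Tr(\alpha)^\frac12$, so that the $x^\frac12$-summand descends to $x^{2^{m-1}}$ on $\mathbb F_q$, is exactly the small point that makes the bookkeeping go through.
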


With the established results, one can construct more permutation polynomials using $2$-linear polynomials with different properties. In what follows, let $n$ be odd and $\lambda$ be a $2$-linear polynomial over $\mathbb F_{q^n}$.

\begin{proposition}
If $\lambda^\prime(\ker\mathrm{Tr})=\ker\mathrm{Tr}$, $\mathrm{Tr}(\lambda^\prime(x))=\mathrm{Tr}(x)^\frac12$ and $\ell$ is a $2$-linear polynomial of degree less than $q$ with coefficients in $\mathbb F_q$ that permutes $\mathbb F_q$, then $\mathrm{Tr}(x^{q+1}+\ell(x))+\lambda(x)$ is a permutation polynomial of $\mathbb F_{q^n}$.
\end{proposition}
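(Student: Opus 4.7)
The plan is to set $L(x)=\Tr(\ell(x))+\lambda(x)$ and invoke Corollary~\ref{c}, which reduces the problem to verifying two things: $L^\prime(\ker\Tr)=\ker\Tr$, and $L^\prime(x)+x^{1/2}$ acts as an automorphism of $\mathbb{F}_{q^n}/\ker\Tr$. The first step is to unpack $L^\prime$. Since $\Tr(x)=\sum_{i=0}^{n-1}x^{q^i}$ is a $q$-linear polynomial with every coefficient equal to $1$, the definition of the adjoint gives $\Tr^\prime=\Tr$; combining this with properties (ii) and (iii) of adjoints yields
\[L^\prime=(\Tr\circ\ell)^\prime+\lambda^\prime=\ell^\prime\circ\Tr+\lambda^\prime.\]

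For $x\in\ker\Tr$, one has $L^\prime(x)=\ell^\prime(0)+\lambda^\prime(x)=\lambda^\prime(x)$, so $L^\prime$ coincides with $\lambda^\prime$ on $\ker\Tr$; the hypothesis $\lambda^\prime(\ker\Tr)=\ker\Tr$ then gives $L^\prime(\ker\Tr)=\ker\Tr$. For the second condition I would compute
\[\Tr\bigl(L^\prime(x)+x^{1/2}\bigr)=\Tr\bigl(\ell^\prime(\Tr(x))\bigr)+\Tr(\lambda^\prime(x))+\Tr\bigl(x^{1/2}\bigr).\]
Since $\ell$, and hence $\ell^\prime$, has coefficients in $\mathbb{F}_q$, $\ell^\prime(\Tr(x))\in\mathbb{F}_q$; and for $y\in\mathbb{F}_q$ one has $\Tr(y)=ny=y$ as $n$ is odd in characteristic $2$, so the first summand collapses to $\ell^\prime(\Tr(x))$. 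The hypothesis on $\lambda$ makes the second summand equal to $\Tr(x)^{1/2}$, and since $x\mapsto x^{1/2}$ commutes with the Frobenius-sum defining $\Tr$, the third summand also equals $\Tr(x)^{1/2}$; the two halves cancel. Thus $\Tr(L^\prime(x)+x^{1/2})=\ell^\prime(\Tr(x))$, which, under the canonical isomorphism $\mathbb{F}_{q^n}/\ker\Tr\cong\mathbb{F}_q$ afforded by $\Tr$, identifies the action of $L^\prime+(\,\cdot\,)^{1/2}$ on $\mathbb{F}_{q^n}/\ker\Tr$ with $\ell^\prime$ on $\mathbb{F}_q$. Since $\ell$ permutes $\mathbb{F}_q$, property (v) of adjoints forces $\ell^\prime$ to do the same, so the automorphism condition holds and Corollary~\ref{c} finishes the argument.

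The main bookkeeping hazard is tracking which adjoint is which: the adjoint of the $q$-linear map $\Tr$ on $\mathbb{F}_{q^n}$, the $\mathbb{F}_2$-adjoint of $\ell$ on $\mathbb{F}_{q^n}$, and the latter's restriction to $\mathbb{F}_q$, which coincides with the adjoint of $\ell\vert_{\mathbb{F}_q}$ over $\mathbb{F}_2$ because $\ell$ has $\mathbb{F}_q$-coefficients and the trace from $\mathbb{F}_{q^n}$ to $\mathbb{F}_2$ factors through $\Tr$. Beyond that the proof is a direct computation using the three hypotheses and the parity of $n$.
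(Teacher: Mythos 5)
Your proof is correct and takes essentially the same route as the paper's: both reduce to Corollary \ref{c}, use $\lambda^\prime(\ker\mathrm{Tr})=\ker\mathrm{Tr}$ to get that $L^\prime(x)=\ell^\prime(\Tr(x))+\lambda^\prime(x)$ maps $\ker\mathrm{Tr}$ onto itself, observe that $\Tr\big(\lambda^\prime(x)+x^\frac12\big)=0$ so the induced endomorphism of $\mathbb F_{q^n}/\ker\mathrm{Tr}$ is given by $\ell^\prime\circ\Tr$, and conclude from the bijectivity of $\ell^\prime$ on $\mathbb F_q$. The only cosmetic difference is that you identify that endomorphism with $\ell^\prime$ via the canonical isomorphism $\mathbb F_{q^n}/\ker\mathrm{Tr}\cong\mathbb F_q$, whereas the paper checks directly that its kernel is trivial.
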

\begin{proof}
For $\alpha\in\mathbb F_{q^n}$, if $\mathrm{Tr}(\alpha)=0$, then
\[\ell^\prime(\mathrm{Tr}(\alpha))+\lambda^\prime(\alpha)=\lambda^\prime(\alpha),\]
which means $\ell^\prime(\mathrm{Tr}(x))+\lambda^\prime(x)$ maps $\ker\mathrm{Tr}$ onto $\ker\mathrm{Tr}$, and then $\ell^\prime(\mathrm{Tr}(x))+\lambda^\prime(x)+x^\frac12$ defines an endomorphism of $\mathbb F_{q^n}/\ker\mathrm{Tr}$. Furthermore,
\[\ker\mathrm{Tr}+\ell^\prime(\mathrm{Tr}(\alpha))+\lambda^\prime(\alpha)+\alpha^\frac12=\ker\mathrm{Tr}+\ell^\prime(\mathrm{Tr}(\alpha)),\]
since $\mathrm{Tr}\big(\lambda^\prime(\alpha)+\alpha^\frac12\big)=0$. If $\ell^\prime(\mathrm{Tr}(\alpha))\in\ker\mathrm{Tr}$, then $\ell^\prime(\mathrm{Tr}(\alpha))=\mathrm{Tr}(\ell^\prime(\mathrm{Tr}(\alpha)))=0$ as $\ell^\prime$ maps $\mathbb F_q$ onto $\mathbb F_q$ ($\ell^\prime$ is, by definition, exactly the adjoint of $\ell$ when they are viewed as linear endomorphisms of $\mathbb F_q/\mathbb F_2$), and thus $\mathrm{Tr}(\alpha)=0$. This implies that the kernel of $\ell^\prime(\mathrm{Tr}(x))+\lambda^\prime(x)+x^\frac12$ as an endomorphism of $\mathbb F_{q^n}/\ker\mathrm{Tr}$ is trivial.
\end{proof}

\begin{proposition}
If $L^\prime(\ker\mathrm{Tr})=\lambda^\prime(\ker\mathrm{Tr})=\ker\mathrm{Tr}$ and $\mathrm{Tr}(\lambda^\prime(x))=0$, then both $\mathrm{Tr}(x^{q+1})+L(\lambda(x))$ and $\mathrm{Tr}(x^{q+1})+\lambda(L(x))$ are permutation polynomials of $\mathbb F_{q^n}$.
\end{proposition}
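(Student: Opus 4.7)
My plan is to invoke Corollary \ref{c} twice, once for $L\circ\lambda$ and once for $\lambda\circ L$, each of which is again a $2$-linear polynomial over $\mathbb F_{q^n}$. By property (iii) of adjoints, the relevant adjoints are $(L\circ\lambda)'=\lambda'\circ L'$ and $(\lambda\circ L)'=L'\circ\lambda'$. For each I must verify (a) that this adjoint sends $\ker\mathrm{Tr}$ onto $\ker\mathrm{Tr}$, and (b) that adding $x^\frac12$ produces an automorphism of $\mathbb F_{q^n}/\ker\mathrm{Tr}$.

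Condition (a) I would dispatch immediately: since by hypothesis both $L'$ and $\lambda'$ carry $\ker\mathrm{Tr}$ bijectively onto itself, so does either composition.

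The real content lies in (b). The key observation is that the hypothesis $\Tr(\lambda'(x))=0$ holds for \emph{every} $x\in\mathbb F_{q^n}$, not merely on $\ker\mathrm{Tr}$. Hence $\lambda'(\mathbb F_{q^n})\subseteq\ker\mathrm{Tr}$, which forces $\lambda'\circ L'$ to land entirely in $\ker\mathrm{Tr}$; and since $L'(\ker\mathrm{Tr})=\ker\mathrm{Tr}$, the composition $L'\circ\lambda'$ also does. It follows that each of the sums $\lambda'(L'(x))+x^\frac12$ and $L'(\lambda'(x))+x^\frac12$ has trace equal to $\Tr(x)^\frac12$, so the associated map $l$ on $\mathbb F_q$ from Lemma \ref{l}(i) is the inverse Frobenius $y\mapsto y^\frac12$, an automorphism of $\mathbb F_q/\mathbb F_2$. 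Lemma \ref{l}(ii) then promotes this to the automorphism of $\mathbb F_{q^n}/\ker\mathrm{Tr}$ required by Corollary \ref{c}, and the result follows.

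The one nontrivial step — and thus the main obstacle — is recognizing that $\Tr\circ\lambda'\equiv 0$ is a \emph{global} statement, which makes the composition involving $\lambda'$ disappear modulo $\ker\mathrm{Tr}$ and reduces the automorphism check to the trivial one for $x^\frac12$ alone. After that, the argument is routine manipulation of adjoints and induced maps.
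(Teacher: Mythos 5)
Your argument is correct and follows the same route as the paper: the paper's (one-line) proof likewise reduces everything to Corollary \ref{c} via the observation that $\im\lambda^\prime\subseteq\ker\mathrm{Tr}$, so that $\lambda^\prime$ induces the trivial endomorphism of $\mathbb F_{q^n}/\ker\mathrm{Tr}$ and each composite adjoint plus $x^\frac12$ induces the automorphism $y\mapsto y^\frac12$. Your write-up just makes explicit the surjectivity check on $\ker\mathrm{Tr}$ and the identification of the induced map via Lemma \ref{l}.
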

\begin{proof}
This follows from the fact that $\im\lambda^\prime\subseteq\ker\mathrm{Tr}$ and $\lambda^\prime$ induces a trivial endomorphism of $\mathbb F_{q^n}/\ker\mathrm{Tr}$.
\end{proof}


Finally, we can derive more from some already obtained permutation polynomial of the form $\mathrm{Tr}(x^{q+1})+L(x)$.

\begin{theorem}
Assume that $\mathrm{Tr}(x^{q+1})+L(x)$ is a permutation polynomial of $\mathbb F_{q^n}$. Then $|\ker L|\le q$ and
\begin{enumerate}
\item[\upshape(i)]both
\[\mathrm{Tr}(x^{q+1})+L(\lambda(x))\quad\text{and}\quad\mathrm{Tr}(x^{q+1})+\lambda(L(x))\]
are permutation polynomials of $\mathbb F_{q^n}$ if $\lambda^\prime(\ker\mathrm{Tr})=\ker\mathrm{Tr}$ and $\mathrm{Tr}(\lambda^\prime(x))=\mathrm{Tr}(x)$;
\item[\upshape(ii)]both
\[\mathrm{Tr}(x^{q+1})+L(x)+L(\mathrm{Tr}(\lambda(x)))+\mathrm{Tr}(\lambda(x))^2\]
and
\[\mathrm{Tr}(x^{q+1})+L(x)+\mathrm{Tr}(\lambda(L(x)))+\mathrm{Tr}(\lambda(x^2))\]
are permutation polynomials of $\mathbb F_{q^n}$ if $x+\mathrm{Tr}(\lambda^\prime(x))$ is a permutation polynomial of $\mathbb F_q$.
\end{enumerate}
\end{theorem}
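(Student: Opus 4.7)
The plan is to verify the criterion of Corollary \ref{c} for each of the four candidate polynomials, after first settling the bound on $|\ker L|$. By Corollary \ref{c}, the standing hypothesis gives $L'(\ker\mathrm{Tr})=\ker\mathrm{Tr}$, so Lemma \ref{l}(iii) yields $\ker\mathrm{Tr}\cap\ker L'=\{0\}$; thus $\Tr$ restricts to an injection of $\ker L'$ into $\mathbb F_q$, and property (v) of the adjoint gives $|\ker L|=|\ker L'|\le q$.

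For part (i), I will compute $(L\circ\lambda)'=\lambda'\circ L'$ and $(\lambda\circ L)'=L'\circ\lambda'$ using property (iii). The assumption $\Tr(\lambda'(x))=\Tr(x)$ says precisely that $\lambda'$ induces the identity on $\mathbb F_{q^n}/\ker\mathrm{Tr}$, while $\lambda'(\ker\mathrm{Tr})=\ker\mathrm{Tr}$ combined with $L'(\ker\mathrm{Tr})=\ker\mathrm{Tr}$ ensures that both composite adjoints still preserve $\ker\mathrm{Tr}$. On the quotient, both $\lambda'\circ L'$ and $L'\circ\lambda'$ coincide with $L'$, so adding $x^\frac12$ yields the same automorphism of $\mathbb F_{q^n}/\ker\mathrm{Tr}$ as $L'(x)+x^\frac12$, and Corollary \ref{c} applies.

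For part (ii), the two new polynomials are $\mathbb F_2$-linear since each summand is a composition of $\mathbb F_2$-linear maps. I will compute their adjoints using property (iii) together with $\Tr'=\Tr$ and the fact that the adjoint of $x\mapsto x^2$ is $x\mapsto x^\frac12$. The extra summands factor through $\Tr$ and hence vanish on $\ker\mathrm{Tr}$, so each new adjoint agrees with $L'$ on $\ker\mathrm{Tr}$ and preserves it. The heart of the argument is identifying the map induced on $\mathbb F_{q^n}/\ker\mathrm{Tr}\cong\mathbb F_q$: setting $t=\Tr(x)$, $g(t)=t+\Tr(\lambda'(t))$, and $\ell(t)=l_L(t)+t^\frac12$ with $l_L$ the map from Lemma \ref{l}(i), the trace of ``adjoint plus $x^\frac12$'' collapses---via $\mathbb F_2$-linearity of $g$ in the first case and $(a+b)^\frac12=a^\frac12+b^\frac12$ in the second---to $g(\ell(t))$ and $\ell(g(t))$ respectively. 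Since $\ell$ permutes $\mathbb F_q$ by Theorem \ref{t} and $g$ does by hypothesis, both compositions are bijections, so Corollary \ref{c} furnishes the required permutation property. The main obstacle is precisely this book-keeping in (ii); all other steps are immediate from the adjoint calculus and the established criterion.
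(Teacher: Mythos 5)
Your proposal is correct and follows essentially the same route as the paper: the bound $|\ker L|\le q$ via $\ker\mathrm{Tr}\cap\ker L^\prime=\{0\}$ together with adjoint property (v), part (i) via the adjoint calculus and the observation that $\lambda^\prime$ induces the identity on $\mathbb F_{q^n}/\ker\mathrm{Tr}$, and part (ii) by checking the hypotheses of Corollary \ref{c} after exhibiting the induced map on the quotient as the composition, in either order, of $\ell$ with $t\mapsto t+\Tr(\lambda^\prime(t))$. The only cosmetic difference is that the paper packages that composition upstairs as $\varphi(x)=x+\lambda^\prime(\Tr(x))$ before descending to the quotient, whereas you descend to $\mathbb F_q$ immediately; the content is identical.
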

\begin{proof}
By the assumption, one has $\ker\mathrm{Tr}\cap\ker L^\prime=\{0\}$, and then
\[|\ker L^\prime|=|\ker\mathrm{Tr}\cap\ker L^\prime||\mathrm{Tr}(\ker L^\prime)|=|\mathrm{Tr}(\ker L^\prime)|\le q.\]
For (i), note that
\[\lambda^\prime(L^\prime(\ker\mathrm{Tr}))=L^\prime(\lambda^\prime(\ker\mathrm{Tr}))=\ker\mathrm{Tr}.\]
Furthermore, $\lambda^\prime$ fixes every coset in $\mathbb F_{q^n}/\ker\mathrm{Tr}$, so $\lambda^\prime(L^\prime(x))+x^\frac12$ and $L^\prime(\lambda^\prime(x))+x^\frac12$ induce the same automorphism of $\mathbb F_{q^n}/\ker\mathrm{Tr}$ as $L^\prime(x)+x^\frac12$ does. Hence, the polynomials are both permutation polynomials of $\mathbb F_{q^n}$ by Corollary \ref{c}.

To prove (ii), let $\varphi(x)=x+\lambda^\prime(\mathrm{Tr}(x))$, which defines an automorphism of $\mathbb F_{q^n}/\ker\mathrm{Tr}$ according to Lemma \ref{l}. Consider the adjoint of $L(x)+L(\mathrm{Tr}(\lambda(x)))+\mathrm{Tr}(\lambda(x))^2$:
\[L^\prime(x)+\lambda^\prime(\mathrm{Tr}(L^\prime(x)))+\lambda^\prime\big(\mathrm{Tr}\big(x^\frac12\big)\big)=\varphi\big(L^\prime(x)+x^\frac12\big)+x^\frac12.\]
Clearly it maps $\ker\mathrm{Tr}$ onto $\ker\mathrm{Tr}$, and
\[L^\prime(x)+\lambda^\prime(\mathrm{Tr}(L^\prime(x)))+\lambda^\prime\big(\mathrm{Tr}\big(x^\frac12\big)\big)+x^\frac12=\varphi\big(L^\prime(x)+x^\frac12\big),\]
which permutes $\mathbb F_{q^n}/\ker\mathrm{Tr}$. For $L(x)+\mathrm{Tr}(\lambda(L(x)))+\mathrm{Tr}(\lambda(x^2))$, we have
\[L^\prime(x)+L^\prime(\lambda^\prime(\mathrm{Tr}(x)))+\lambda^\prime(\mathrm{Tr}(x))^\frac12=L^\prime(\varphi(x))+\varphi(x)^\frac12+x^\frac12,\]
and arrive at the conclusion by the same argument.
\end{proof}

\section{The case of even $n$}\label{even}

In this section, let $n$ be even and consider $\mathrm{Tr}(Ax^{q+1})+L(x)$ with $A\in\mathbb F_{q^n}^*$. As in the case of odd $n$, it is a permutation polynomial of $\mathbb F_{q^n}$ if and only if for every $\beta\in\mathbb F_{q^n}$,
\[\mathrm{Tr}(Ax^{q+1})+\mathrm{Tr}\big(\big(A^{q^{n-1}}\beta^{q^{n-1}}+A\beta^q\big)x\big)+L(x)\]
has only one root in $\mathbb F_{q^n}$. If $A=\alpha^{q+1}$ for some $\alpha\in\mathbb F_{q^n}^*$, then it can be reduced to the case $A=1$, where $x^{q^{n-1}}+x^q$ maps $\mathbb F_{q^n}$ onto $\ker\mathrm{Tr}_2$ with kernel $\mathbb F_{q^2}$. Otherwise (i.e., $A^\frac{q^n-1}{q+1}\ne1$), $A^{q^{n-1}}x^{q^{n-1}}+Ax^q$ induces a linear automorphism of $\mathbb F_{q^n}/\mathbb F_q$ by a simple investigation. Then $\mathrm{Tr}(Ax^{q+1})+L(x)$ is a permutation polynomial of $\mathbb F_{q^n}$ if and only if
\[q^{-n}\sum_{u\in\mathbb F_{q^n}}\mathcal S(A\mathrm{Tr}(u),t\mathrm{Tr}(u)+L^\prime(u))=1\]
for every $t\in\ker\mathrm{Tr}_2$ when $A=1$, and for every $t\in\mathbb F_{q^n}$ when $A^\frac{q^n-1}{q+1}\ne1$. For our purpose, the character sum will be evaluated using the following lemma.

\begin{lemma}[{\cite[Theorem 5.2, Theorem 5.3]{coulter1999evaluation}}]
Suppose that $n$ is even. For $a\in\mathbb F_q^*$ and $b\in\mathbb F_{q^n}$, if $b=\beta^{q^2}+\beta$ for some $\beta\in\mathbb F_{q^n}$, then $\mathcal S(a,b)=\chi(a^{-1}\beta^{q+1})(-q)^{\frac n2+1}$; otherwise, $\mathcal S(a,b)=0$. For $a\in\mathbb F_{q^n}^*$ such that $a^\frac{q^n-1}{q+1}\ne1$ and $b\in\mathbb F_{q^n}$, there exists a unique element $\beta$ in $\mathbb F_{q^n}$ satisfying $a^{q^{n-1}}\beta^{q^{n-1}}+a\beta^q=b$ and $\mathcal S(a,b)=\chi(a\beta^{q+1})(-q)^\frac n2$.
\end{lemma}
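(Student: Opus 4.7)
The plan is to reduce the evaluation of $\mathcal{S}(a,b)$ to that of $\mathcal{S}(a,0)$ by completing the square, then to compute $\mathcal{S}(a,0)$ via the $|\mathcal{S}|^2$ method coupled with a quadratic Gauss sum argument for the sign.

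First I would perform the translation $w = u+\gamma$ inside $\mathcal{S}(a,b)$. After expanding $a(u+\gamma)^{q+1}+b(u+\gamma)$ and using $\chi(au^{q}\gamma)=\chi(a^{q^{n-1}}\gamma^{q^{n-1}}u)$ (Frobenius-invariance of $\chi$), the coefficient of $u$ in the exponent becomes $a\gamma^{q}+a^{q^{n-1}}\gamma^{q^{n-1}}+b$. Provided $\gamma$ can be chosen with $L_a(\gamma):=a^{q^{n-1}}\gamma^{q^{n-1}}+a\gamma^{q}=b$, one obtains $\mathcal{S}(a,b)=\chi(a\gamma^{q+1}+b\gamma)\,\mathcal{S}(a,0)$. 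Substituting $b=L_a(\gamma)$ and using $\chi(x)=\chi(x^{q})$, the prefactor simplifies to $\chi(a\gamma^{q+1})$, handling the second case directly. For $a\in\mathbb{F}_q^*$ with $b=\beta^{q^2}+\beta$, the choice $\gamma=a^{-1}\beta^{q}$ verifies $L_a(\gamma)=b$ (using $a^{q}=a$ and $\beta^{q^n}=\beta$), and the exponent collapses to $a\gamma^{q+1}+b\gamma=a^{-1}\beta^{q+1}$, matching the stated formula. When $b\notin\im L_a$, I would argue that for any nonzero $\delta\in\ker L_a$ the substitution $w=u+\delta$ yields a nontrivial character twist of $\mathcal{S}(a,b)$, forcing it to vanish.

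Next I would verify the dichotomy of $L_a$. For $a\in\mathbb{F}_q^*$, $L_a(\gamma)=a(\gamma^{q}+\gamma^{q^{n-1}})$ has kernel $\mathbb{F}_{q^2}$ (since $\gamma^{q^{n-1}}=\gamma^{q}$ forces $\gamma\in\mathbb{F}_{q^{\gcd(n-2,n)}}=\mathbb{F}_{q^2}$ for even $n$), and a telescoping check shows $\im L_a=\{\beta^{q^2}+\beta:\beta\in\mathbb{F}_{q^n}\}$. For $a^{(q^n-1)/(q+1)}\ne1$, writing the kernel equation as $h^{q^{n-2}-1}=a^{-(q^{n-1}-1)}$ with $h=\gamma^{q}$, one checks this admits no nonzero solution, so $L_a$ is a bijection and the required $\beta$ exists uniquely.

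Finally, I would compute $\mathcal{S}(a,0)$. The standard $|\mathcal{S}(a,0)|^2$ expansion under the substitution $v=w+w'$ separates the double sum into
\[|\mathcal{S}(a,0)|^{2}=\sum_{v}\chi(av^{q+1})\sum_{w'}\chi\bigl((a^{q^{n-1}}v^{q^{n-1}}+av^{q})w'\bigr),\]
and the inner sum forces $v\in\ker L_a$. On that kernel, $\chi(av^{q+1})=1$, using that $\Tr(av^{q+1})$ over $\mathbb{F}_2$ vanishes when $n$ is even and $av^{q+1}\in\mathbb{F}_q$. This yields $|\mathcal{S}(a,0)|^{2}=q^{n+2}$ in the first case and $q^{n}$ in the second, confirming the stated magnitudes. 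The main obstacle is recovering the sign $(-1)^{n/2+1}$ (respectively $(-1)^{n/2}$); the cleanest route is to recognize $\mathcal{S}(a,0)$ as the Gauss-type sum attached to the nondegenerate quadratic form $Q(x)=\Tr(ax^{q+1})$ on $\mathbb{F}_{q^n}/\mathbb{F}_2$, determine its type (hyperbolic vs.\ elliptic) through its Arf invariant, and apply the explicit formula for quadratic Gauss sums in characteristic two, as carried out by Coulter; following that route yields the claimed sign and completes the proof.
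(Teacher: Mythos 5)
This lemma is imported verbatim from Coulter's Theorems 5.2 and 5.3 and the paper offers no proof of it, so there is no internal argument to compare against; I can only judge your attempt on its own terms. The reduction of $\mathcal S(a,b)$ to $\mathcal S(a,0)$ is sound and complete: the shift $w=u+\gamma$ with $L_a(\gamma):=a^{q^{n-1}}\gamma^{q^{n-1}}+a\gamma^q=b$, the simplification of the prefactor to $\chi(a\gamma^{q+1})$, the choice $\gamma=a^{-1}\beta^q$ giving $\chi(a^{-1}\beta^{q+1})$ in the first case, the identifications $\ker L_a=\mathbb F_{q^2}$ and $\im L_a=\{\beta^{q^2}+\beta\}$ for $a\in\mathbb F_q^*$, and the bijectivity of $L_a$ when $a^{(q^n-1)/(q+1)}\ne1$ all check out, as does the $|\mathcal S(a,0)|^2$ computation of the magnitudes. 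One phrasing error: for $b\notin\im L_a$ the vanishing comes from \emph{some} $\delta\in\ker L_a$ with $\chi(b\delta)\ne1$, not from \emph{any} nonzero $\delta$; this is available because $L_a$ is self-adjoint with respect to the absolute trace pairing, so $(\ker L_a)^\perp=\im L_a$, and because you have already shown $\chi(a\delta^{q+1})=1$ on $\ker L_a$.

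The genuine gap is the sign. Everything you actually prove pins $\mathcal S(a,0)$ down only to $\pm q^{(n+2)/2}$ (resp.\ $\pm q^{n/2}$); the determination that the sign is $(-1)^{n/2+1}$ (resp.\ $(-1)^{n/2}$) is deferred to ``the explicit formula for quadratic Gauss sums\dots as carried out by Coulter,'' which is precisely the nontrivial content of the theorems being proved --- the sign is the one piece of the statement that cannot be recovered from $|\mathcal S|^2$, and it is what the paper actually uses (the cancellations in the proofs of Section 4 depend on $(-q)^{n/2+1}$ versus $(-q)^{n/2}$, not just on the absolute values). Moreover, your proposed route is misstated: $Q(x)=\Tr_{\mathbb F_{q^n}/\mathbb F_2}(ax^{q+1})$ is \emph{not} nondegenerate in the first case, since its radical is $\ker L_a=\mathbb F_{q^2}$ of $\mathbb F_2$-dimension $2m$; one must pass to the induced form on $\mathbb F_{q^n}/\mathbb F_{q^2}$ (legitimate, because you showed $Q$ vanishes on the radical) and then actually compute the type (Arf invariant) of that quotient form as a function of $n/2$ and $m$. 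No step of your argument carries out that computation, so as written the proposal establishes the support and the absolute value of $\mathcal S(a,b)$ but assumes, rather than proves, the sign.
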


\begin{theorem}
Let $A\in\mathbb F_{q^n}^*$. Then $\mathrm{Tr}(x^{q+1})+L(x)$ is a permutation polynomial of $\mathbb F_{q^n}$ if and only if $\ker(\mathrm{Tr}_2\circ L^\prime)\subseteq\ker\mathrm{Tr}$ and $\ker L=\{0\}$. If $A^\frac{q^n-1}{q+1}\ne1$, then $\mathrm{Tr}(Ax^{q+1})+L(x)$ is not a permutation polynomial of $\mathbb F_{q^n}$.
\end{theorem}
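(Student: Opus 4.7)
The plan is to exploit the character-sum criterion recalled just before the statement: $\Tr(Ax^{q+1})+L(x)$ is a permutation polynomial iff $N_t := q^{-n}\sum_u\mathcal S(A\Tr(u),t\Tr(u)+L^\prime(u))=1$ for every admissible $t$, where $t$ ranges over $\ker\Tr_2$ when $A=1$ and over $\mathbb F_{q^n}$ when $A^{(q^n-1)/(q+1)}\ne 1$. In each case I sum $N_t$ over the admissible range, interchange summation, and split the $u$-sum according to whether $\Tr(u)=0$. The $u\in\ker\Tr$ part contributes $\mathcal S(0,L^\prime(u))$, which equals $q^n$ if $L^\prime(u)=0$ and vanishes otherwise, producing a factor proportional to $|\ker\Tr\cap\ker L^\prime|$. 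For the $u\notin\ker\Tr$ part the key identity is
\[\sum_{b\in\mathbb F_{q^n}}\mathcal S(a,b)=\sum_w\chi(aw^{q+1})\sum_b\chi(bw)=q^n\qquad(a\in\mathbb F_{q^n}^*),\]
obtained by orthogonality of characters; this will collapse the inner $t$-sum once the support of $\mathcal S(a,\cdot)$ and a bijection on the $t$-variable are identified.

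For $A=1$ and $u\notin\ker\Tr$, setting $a=\Tr(u)\in\mathbb F_q^*$, the first half of Coulter's lemma confines $\mathcal S(a,\cdot)$ to $\ker\Tr_2$; since $t\in\ker\Tr_2$ and $a\in\mathbb F_q$, non-vanishing forces $u\in\ker(\Tr_2\circ L^\prime)$. For such $u$, the map $t\mapsto ta+L^\prime(u)$ bijects $\ker\Tr_2$ onto itself, so the $t$-sum equals $\sum_{b\in\ker\Tr_2}\mathcal S(a,b)=\sum_{b\in\mathbb F_{q^n}}\mathcal S(a,b)=q^n$. Assembling gives
\[\sum_{t\in\ker\Tr_2}N_t = q^{n-2}|\ker\Tr\cap\ker L^\prime|+|\ker(\Tr_2\circ L^\prime)\setminus\ker\Tr|.\]
Equating with the required value $|\ker\Tr_2|=q^{n-2}$ and using $|\ker\Tr\cap\ker L^\prime|\ge 1$ forces both $\ker\Tr\cap\ker L^\prime=\{0\}$ and $\ker(\Tr_2\circ L^\prime)\subseteq\ker\Tr$; the chain $\ker L^\prime\subseteq\ker(\Tr_2\circ L^\prime)\subseteq\ker\Tr$ then yields $\ker L^\prime=\{0\}$, hence $\ker L=\{0\}$ by property (v). Conversely, these two conditions kill the $\Tr(u)\ne 0$ contribution and leave $N_t=1$ for every $t$.

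For $A$ with $A^{(q^n-1)/(q+1)}\ne 1$: since $n$ is even we have the factorisation $(q^n-1)/(q+1)=(q-1)(1+q^2+\cdots+q^{n-2})$, so $\Tr(u)^{(q^n-1)/(q+1)}=1$ for $\Tr(u)\in\mathbb F_q^*$, giving $(A\Tr(u))^{(q^n-1)/(q+1)}\ne 1$ and triggering the second half of the lemma. Now $t\mapsto t\Tr(u)+L^\prime(u)$ bijects all of $\mathbb F_{q^n}$, so the $t$-sum equals $\sum_{b\in\mathbb F_{q^n}}\mathcal S(A\Tr(u),b)=q^n$, producing
\[\sum_{t\in\mathbb F_{q^n}}N_t = q^n|\ker\Tr\cap\ker L^\prime|+q^n-q^{n-1}\ge 2q^n-q^{n-1}>q^n,\]
which contradicts the required $\sum_t N_t=q^n$; hence no $L$ makes $\Tr(Ax^{q+1})+L(x)$ a permutation polynomial. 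The main difficulty I anticipate is correctly identifying the support of $\mathcal S(a,\cdot)$ and the relevant bijection on the $t$-variable so that the orthogonality identity can be invoked (in particular, that $\mathbb F_q^*$ stabilises $\ker\Tr_2$ and that the shift by $L^\prime(u)$ preserves the relevant coset in the $A=1$ case); once those are pinned down via Coulter's lemma, the counting closes cleanly thanks to the evenness of $n$.
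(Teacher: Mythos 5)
Your proposal is correct and follows essentially the same route as the paper: the same character-sum criterion, the same split of the $u$-sum according to $\Tr(u)=0$, the same use of Coulter's lemma to confine the support of $\mathcal S(a,\cdot)$ to $\ker\mathrm{Tr}_2$, and the same counting identity forcing $|\ker\mathrm{Tr}\cap\ker L^\prime|=1$ and $\ker(\Tr_2\circ L^\prime)\subseteq\ker\mathrm{Tr}$. The only (harmless) deviations are that you evaluate the collapsed $t$-sums via the orthogonality relation $\sum_b\mathcal S(a,b)=q^n$ instead of Coulter's explicit values, and you close the $A^{(q^n-1)/(q+1)}\ne1$ case by a magnitude bound rather than the paper's divisibility argument.
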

\begin{proof}
First, we study $\mathrm{Tr}(x^{q+1})+L(x)$ and
\[\sum_{u\in\mathbb F_{q^n}}\mathcal S(\mathrm{Tr}(u),t\mathrm{Tr}(u)+L^\prime(u))\]
for $t\in\ker\mathrm{Tr}_2$. It is easy to see that
\[\sum_{u\in\ker\mathrm{Tr}}\mathcal S(\mathrm{Tr}(u),t\mathrm{Tr}(u)+L^\prime(u))=|\ker\mathrm{Tr}\cap\ker L^\prime|q^n.\]
Also, for $u\in\mathbb F_{q^n}$ such that $\mathrm{Tr}(u)\ne0$, $\mathcal S(\mathrm{Tr}(u),t\mathrm{Tr}(u)+L^\prime(u))$ is nonzero if and only if $\mathrm{Tr}_2(t\mathrm{Tr}(u)+L^\prime(u))=0$; that is, $u\in\ker T$ for $T=\mathrm{Tr}_2\circ L^\prime$. Noticing that
\[\begin{split}&\mathrel{\phantom{=}}\sum_{t\in\ker\mathrm{Tr}_2}\sum_{u\in\ker T\setminus\ker\mathrm{Tr}}\mathcal S(\mathrm{Tr}(u),t\mathrm{Tr}(u)+L^\prime(u))\\&=\sum_{u\in\ker T\setminus\ker\mathrm{Tr}}\sum_{t\in\ker\mathrm{Tr}_2}\mathcal S(\mathrm{Tr}(u),t\mathrm{Tr}(u)+L^\prime(u))\\&=\sum_{u\in\ker T\setminus\ker\mathrm{Tr}}q^{-2}\sum_{\beta\in\mathbb F_{q^n}}\mathcal S(\mathrm{Tr}(u),\beta^{q^2}+\beta)\\&=\sum_{u\in\ker T\setminus\ker\mathrm{Tr}}q^{-2}\sum_{\beta\in\mathbb F_{q^n}}\chi(\mathrm{Tr}(u)^{-1}\beta^{q+1})(-q)^{\frac n2+1}\\&=\sum_{u\in\ker T\setminus\ker\mathrm{Tr}}q^{-2}\mathcal S(\mathrm{Tr}(u)^{-1},0)(-q)^{\frac n2+1}\\&=|\ker T\setminus\ker\mathrm{Tr}|q^n,\end{split}\]
and that
\[\sum_{t\in\ker\mathrm{Tr}_2}\sum_{u\in\ker\mathrm{Tr}}\mathcal S(\mathrm{Tr}(u),t\mathrm{Tr}(u)+L^\prime(u))=|\ker\mathrm{Tr}\cap\ker L^\prime|q^{2n-2},\]
we obtain
\[\sum_{t\in\ker\mathrm{Tr}_2}\sum_{u\in\mathbb F_{q^n}}\mathcal S(\mathrm{Tr}(u),t\mathrm{Tr}(u)+L^\prime(u))=|\ker\mathrm{Tr}\cap\ker L^\prime|q^{2n-2}+|\ker T\setminus\ker\mathrm{Tr}|q^n.\]
Then $\mathrm{Tr}(x^{q+1})+L(x)$ is a permutation polynomial of $\mathbb F_{q^n}$ only if
\[|\ker\mathrm{Tr}\cap\ker L^\prime|q^{2n-2}+|\ker T\setminus\ker\mathrm{Tr}|q^n=q^{2n-2};\]
that is, $|\ker\mathrm{Tr}\cap\ker L^\prime|=1$ and $|\ker T\setminus\ker\mathrm{Tr}|=0$. Necessarily, we get
\[\ker L^\prime\subseteq\ker T\subseteq\ker\mathrm{Tr},\]
and then $\ker L^\prime=\{0\}$. Conversely, if $\ker T\subseteq\ker\mathrm{Tr}$ and $\ker L^\prime=\{0\}$, then $|\ker T\setminus\ker\mathrm{Tr}|=0$ and
\[\sum_{u\in\mathbb F_{q^n}}\mathcal S(\mathrm{Tr}(u),t\mathrm{Tr}(u)+L^\prime(u))=|\ker\mathrm{Tr}\cap\ker L^\prime|q^n=q^n\]
for every $t\in\ker\mathrm{Tr}_2$.

Now consider $\mathrm{Tr}(Ax^{q+1})+L(x)$ for $A\in\mathbb F_{q^n}^*$ with $A^\frac{q^n-1}{q+1}\ne1$. Assuming that it is a permutation polynomial of $\mathbb F_{q^n}$, we get
\[\sum_{t\in\mathbb F_{q^n}}\sum_{u\in\mathbb F_{q^n}}\mathcal S(A\mathrm{Tr}(u),t\mathrm{Tr}(u)+L^\prime(u))=q^{2n}.\]
As before,
\[\sum_{t\in\mathbb F_{q^n}}\sum_{u\in\ker\mathrm{Tr}}\mathcal S(A\mathrm{Tr}(u),t\mathrm{Tr}(u)+L^\prime(u))=|\ker\mathrm{Tr}\cap\ker L^\prime|q^{2n}.\]
For arbitrary $u\in\mathbb F_{q^n}\setminus\ker\mathrm{Tr}$, clearly $(A\mathrm{Tr}(u))^{q^{n-1}}x^{q^{n-1}}+A\mathrm{Tr}(u)x^q$ is a permutation polynomial of $\mathbb F_{q^n}$ since $(A\mathrm{Tr}(u))^\frac{q^n-1}{q+1}=A^\frac{q^n-1}{q+1}\ne1$, and $t\mathrm{Tr}(u)+L^\prime(u)$ runs through $\mathbb F_{q^n}$ as $t$ does, so
\[\begin{split}&\mathrel{\phantom{=}}\sum_{t\in\mathbb F_{q^n}}\mathcal S(A\mathrm{Tr}(u),t\mathrm{Tr}(u)+L^\prime(u))\\&=\sum_{\beta\in\mathbb F_{q^n}}\mathcal S\big(A\mathrm{Tr}(u),(A\mathrm{Tr}(u))^{q^{n-1}}\beta^{q^{n-1}}+A\mathrm{Tr}(u)\beta^q\big)\\&=\sum_{\beta\in\mathbb F_{q^n}}\chi(A\mathrm{Tr}(u)\beta^{q+1})(-q)^\frac n2\\&=\mathcal S(A\mathrm{Tr}(u),0)(-q)^\frac n2=q^n.\end{split}\]
Therefore,
\[\sum_{t\in\mathbb F_{q^n}}\sum_{u\in\mathbb F_{q^n}\setminus\ker\mathrm{Tr}}\mathcal S(A\mathrm{Tr}(u),t\mathrm{Tr}(u)+L^\prime(u))=\sum_{u\in\mathbb F_{q^n}\setminus\ker\mathrm{Tr}}q^n=q^{2n}-q^{2n-1},\]
while
\[\sum_{t\in\mathbb F_{q^n}}\sum_{u\in\mathbb F_{q^n}}\mathcal S(A\mathrm{Tr}(u),t\mathrm{Tr}(u)+L^\prime(u))=|\ker\mathrm{Tr}\cap\ker L^\prime|q^{2n}+q^{2n}-q^{2n-1}\]
is not divisible by $q^{2n}$, a contradiction.
\end{proof}

\begin{remark}
By the same argument, if $\ker(\mathrm{Tr}_2\circ L^\prime)\subseteq\ker\mathrm{Tr}$, then $\mathrm{Tr}(x^{q+1})+L(x)$ is an $N$-to-$1$ map on $\mathbb F_{q^n}$, where $N=|\ker L^\prime|$.
\end{remark}

For explicit constructions of such permutation polynomials, we focus on the case where $L$ is a monomial or binomial.

\begin{proposition}
Let $L(x)=L_k\big(x^{2^k}\big)+L_l\big(x^{2^l}\big)$ for $q^2$-linear polynomial $L_k$ and $L_l$, $k,l$ be integers with $0\le k<l<2m$ and $e=\gcd(l-k,2m)$. Denote $a=L_k(1)$, $b=L_l(1)$ and $\delta=a^{q^2}b+ab^{q^2}$. Then $\ker(\mathrm{Tr}_2\circ L^\prime)\subseteq\ker\mathrm{Tr}$ if and only if
\begin{itemize}
\item $l-k=m$, $a\notin\mathbb F_{q^2}$, $\delta\ne0$ and
\[\delta^q\big(a^{q^2}+a\big)+\delta\big(b^{q^3}+b^q\big)=\delta^q\big(b^{q^2}+b\big)+\delta\big(a^{q^3}+a^q\big)=0,\]
or
\item $a,b\in\mathbb F_{q^2}$ and $a^\frac{q^2-1}{2^e-1}\ne b^\frac{q^2-1}{2^e-1}$, or
\item $e$ divides $m$, $a,b\in\mathbb F_{q^2}^*$ and $a^\frac{2^{l-k}(q-1)}{2^e-1}=b^\frac{q-1}{2^e-1}$.
\end{itemize}
\end{proposition}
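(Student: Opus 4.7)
The plan is to reformulate the containment $\ker(\Tr_2\circ L')\subseteq\ker\Tr$ as an algebraic condition on $(a,b):=(L_k(1),L_l(1))$, then split into cases according to whether $a,b$ lie in $\mathbb F_{q^2}$.

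First I would derive an explicit formula for $T:=\Tr_2\circ L'$. Using $(L_k\circ\phi_k)'=\phi_k'\circ L_k'$ with $\phi_k(x)=x^{2^k}$, the $q^2$-linearity of $L_k'$, and the commuting relation $\Tr_2(y^{2^i})=\Tr_2(y)^{2^i}$, one obtains $T(x)=\Tr_2(ax)^{2^{-k}}+\Tr_2(bx)^{2^{-l}}$. Raising to the $2^l$-th power preserves kernels, so $\ker T=\ker M$ where $M(x):=\Tr_2(ax)^{2^j}+\Tr_2(bx)$ and $j:=l-k$. Now consider the $\mathbb F_{q^2}$-linear map $\Phi:x\mapsto(\Tr_2(x),\Tr_2(ax),\Tr_2(bx))$; by nondegeneracy of $\Tr_2$, its image $V\subseteq\mathbb F_{q^2}^3$ is the $\mathbb F_{q^2}$-annihilator of $\{(\gamma_0,\gamma_1,\gamma_2)\in\mathbb F_{q^2}^3:\gamma_0+\gamma_1 a+\gamma_2 b=0\}$. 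Since $\ker\Tr=\Tr_2^{-1}(\mathbb F_q)$, the condition $\ker M\subseteq\ker\Tr$ becomes: for every $(u,v,w)\in V$, the relation $v^{2^j}+w=0$ forces $u+u^q=0$.

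If $a,b\in\mathbb F_{q^2}$ then $V=\{(u,au,bu):u\in\mathbb F_{q^2}\}$, reducing the condition to every root of $\ell(z):=a^{2^j}z^{2^j}+bz$ in $\mathbb F_{q^2}$ lying in $\mathbb F_q$. Applying the binomial-kernel criterion recalled earlier in the paper, either $\ker\ell\cap\mathbb F_{q^2}=\{0\}$ (equivalent to $a^{(q^2-1)/(2^e-1)}\ne b^{(q^2-1)/(2^e-1)}$, the second bullet), or $\ker\ell\cap\mathbb F_{q^2}=\alpha\mathbb F_{2^e}$ with $\alpha\ne 0$; the latter lies in $\mathbb F_q$ precisely when $e\mid m$ (so $\mathbb F_{2^e}\subseteq\mathbb F_q$) and $\ell$ admits a nonzero root in $\mathbb F_q^*$, equivalent to $a^{2^j(q-1)/(2^e-1)}=b^{(q-1)/(2^e-1)}$ (the third bullet). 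Otherwise I would rule out the mixed subcases $a\in\mathbb F_{q^2},\,b\notin\mathbb F_{q^2}$ and its symmetric counterpart, together with $\{1,a,b\}$ being $\mathbb F_{q^2}$-linearly independent, since in each the $u$-coordinate of $V$ is free in $\mathbb F_{q^2}$ subject to $v^{2^j}+w=0$, violating $u+u^q=0$. This leaves $a\notin\mathbb F_{q^2}$ with $b=\alpha+\beta a$ for $\alpha,\beta\in\mathbb F_{q^2}$, $\beta\ne 0$, so $V=\{(u,v,\alpha u+\beta v)\}$; the equation $v^{2^j}+\alpha u+\beta v=0$ is solvable for $u$ iff $\alpha\ne 0$, equivalent to $\delta=\alpha(a+a^{q^2})\ne 0$ using $a\notin\mathbb F_{q^2}$. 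Substituting $u=\alpha^{-1}(v^{2^j}+\beta v)$ into $u+u^q=0$ produces a linear combination of the Frobenius monomials $v,v^q,v^{2^j},v^{2^{j+m}}$ that must vanish on $\mathbb F_{q^2}$; their $\mathbb F_{q^2}$-linear independence when $j\not\equiv m\pmod{2m}$ would force $\alpha^{-1}=0$, a contradiction, so necessarily $l-k=m$, and setting both coefficients to zero yields the single relation $\beta=\alpha^{1-q}$.

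The main obstacle is recasting $\delta\ne 0$ and $\beta=\alpha^{1-q}$ into the symmetric identities of the first bullet. I would handle this by substituting $b=\alpha+\beta a$ and using $\alpha^{q^2}=\alpha,\beta^{q^2}=\beta$ to obtain $b+b^{q^2}=\beta(a+a^{q^2})$ and $b^q+b^{q^3}=\beta^q(a^q+a^{q^3})$, along with $\delta=\alpha(a+a^{q^2})$ and $\delta^q=\alpha^q(a^q+a^{q^3})$. Each of the two displayed expressions then factors as $(a+a^{q^2})(a^q+a^{q^3})(\alpha^q+\alpha\beta^q)$; since $a\notin\mathbb F_{q^2}$ gives $a+a^{q^2}\ne 0$ and hence also $a^q+a^{q^3}=(a+a^{q^2})^q\ne 0$, both identities collapse to $\alpha^q+\alpha\beta^q=0$, i.e.\ $\beta^q=\alpha^{q-1}$, which is equivalent to $\beta=\alpha^{1-q}$ upon inverting the $q$-th power on $\mathbb F_{q^2}$.
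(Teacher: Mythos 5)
Your overall strategy coincides with the paper's: reduce to $T(x)=\Tr_2(ax)^{2^{-k}}+\Tr_2(bx)^{2^{-l}}$, split according to the $\mathbb F_{q^2}$-linear relations among $1,a,b$, handle the case $a,b\in\mathbb F_{q^2}$ via the binomial-kernel criterion inside $\mathbb F_{q^2}$, dismiss the configurations in which the $\Tr_2(x)$-coordinate remains free, and in the residual case force $l-k=m$ from the independence of Frobenius monomials on $\mathbb F_{q^2}$. Your annihilator description of $V$ is a clean repackaging of the paper's element-by-element existence arguments, and your parametrization $b=\alpha+\beta a$ with the condition $\beta=\alpha^{1-q}$ matches the paper's $1=\mu a+\nu b$ with $\nu=\mu^q$ (take $\mu=\alpha^{-1}\beta$, $\nu=\alpha^{-1}$). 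These steps all check out. A cosmetic slip: the second displayed identity factors as $(a+a^{q^2})(a^q+a^{q^3})(\alpha+\alpha^q\beta)$, not with the factor $(\alpha^q+\alpha\beta^q)$; since the two last factors are $q$-th powers of one another this does not affect the conclusion $\beta=\alpha^{1-q}$.

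There is, however, one genuine gap, in the direction ``first bullet $\Rightarrow$ containment.'' Your translation of the package $\{\delta\ne0$ and the two identities$\}$ into $\{\alpha\ne0,\ \beta=\alpha^{1-q}\}$ is carried out only under the standing hypothesis $b=\alpha+\beta a$ with $\alpha,\beta\in\mathbb F_{q^2}$. The first bullet as stated does not presuppose that $1$ lies in the $\mathbb F_{q^2}$-span of $a$ and $b$: since $\delta\ne0$ only says that $a,b$ are $\mathbb F_{q^2}$-independent, for $n\ge6$ it is conceivable a priori that $1,a,b$ are $\mathbb F_{q^2}$-independent (a case in which you correctly showed the containment fails) while $\delta\ne0$ and the two identities nevertheless hold. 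You must exclude this: show that $a\notin\mathbb F_{q^2}$, $\delta\ne0$ and the identities together force $1=\mu a+\mu^qb$ for some $\mu\in\mathbb F_{q^2}$. The paper does exactly this by inverting $\begin{pmatrix}a&b\\a^{q^2}&b^{q^2}\end{pmatrix}$ and verifying that $\mu=\delta^{-1}\big(b^{q^2}+b\big)$ works, the identities guaranteeing both that the second coordinate equals $\mu^q$ and that $\mu^{q^2}=\mu$. This is a short fix, but as written your argument only establishes the equivalence of the first bullet with the containment on the set of pairs $(a,b)$ for which $b\in\mathbb F_{q^2}+\mathbb F_{q^2}a$.
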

\begin{proof}
Consider $\mathbb F_{q^n}$ as a vector space over $\mathbb F_{q^2}$. If $1$ is not a linear combination of $a$ and $b$ over $\mathbb F_{q^2}$ (including the case $a=b=0$), then there exists $\alpha\in\mathbb F_{q^n}$ such that $\mathrm{Tr}_2(a\alpha)=\mathrm{Tr}_2(b\alpha)=0$ while $\mathrm{Tr}_2(\alpha)\notin\mathbb F_q$, which means
\[\mathrm{Tr}_2(L^\prime(\alpha))=\mathrm{Tr}_2(a\alpha)^{2^{-k}}+\mathrm{Tr}_2(b\alpha)^{2^{-l}}=0\]
and $\mathrm{Tr}(\alpha)=\mathrm{Tr}_2(\alpha)+\mathrm{Tr}_2(\alpha)^q\ne0$.

Suppose $1=\mu a+\nu b$ for some $\mu,\nu\in\mathbb F_{q^2}$ with $a\notin\mathbb F_{q^2}$. We show that $\ker(\mathrm{Tr}_2\circ L^\prime)\subseteq\ker\mathrm{Tr}$ if and only if $l-k=m$ and $\mu^q=\nu$. Note that $\nu\ne0$ and
\[\begin{split}&\mathrel{\phantom{=}}\nu\mathrm{Tr}_2(L^\prime(x))^{2^l}\\&=\nu\mathrm{Tr}_2(ax)^{2^{l-k}}+\mathrm{Tr}_2(\nu bx)\\&=\nu\mathrm{Tr}_2(ax)^{2^{l-k}}+\mathrm{Tr}_2((1+\mu a)x)\\&=\mu\mathrm{Tr}_2(ax)+\nu\mathrm{Tr}_2(ax)^{2^{l-k}}+\mathrm{Tr}_2(x),\end{split}\]
whose kernel is exactly that of $\mathrm{Tr}_2(L^\prime(x))$. The polynomial $\mu x+\nu x^{2^{l-k}}+\big(\mu x+\nu x^{2^{l-k}}\big)^q$ vanishes on $\mathbb F_{q^2}$ if and only if
\[\mu x+\nu x^{2^{l-k}}+\mu^qx^q+\nu^qx^{2^{l-k+m}}\equiv0\pmod{x^{q^2}+x}\]
in $\mathbb F_{q^2}[x]$; that is, $l-k=m$ and $\mu^q=\nu$, by reducing that to a polynomial of degree less than $q^2$. If this is the case, then
\[\mathrm{Tr}_2(\alpha)=\mu\mathrm{Tr}_2(a\alpha)+\nu\mathrm{Tr}_2(a\alpha)^q\in\mathbb F_q\]
and $\mathrm{Tr}(\alpha)=0$ for all $\alpha\in\mathbb F_{q^n}$ such that $\mathrm{Tr}_2(L^\prime(\alpha))=0$. Otherwise, there exists $\alpha\in\mathbb F_{q^n}$ such that $\mathrm{Tr}_2(a\alpha)$ satisfies
\[\mu\mathrm{Tr}_2(a\alpha)+\nu\mathrm{Tr}_2(a\alpha)^{2^{l-k}}\notin\mathbb F_q,\]
and $\mathrm{Tr}_2(\alpha)=\mu\mathrm{Tr}_2(a\alpha)+\nu\mathrm{Tr}_2(a\alpha)^{2^{l-k}}$, which implies $\mathrm{Tr}_2(L^\prime(\alpha))=0$ while $\mathrm{Tr}(\alpha)\ne0$.

Suppose $1=\mu a+\nu b$ for some $\mu,\nu\in\mathbb F_{q^2}$ with $a\in\mathbb F_{q^2}$, so that one of $a$ and $b$ is nonzero and
\[\mathrm{Tr}_2(L^\prime(x))=(a\mathrm{Tr}_2(x))^{2^{-k}}+\mathrm{Tr}_2(bx)^{2^{-l}}.\]
If $b\notin\mathbb F_{q^2}$, then clearly there exists $\alpha\in\mathbb F_{q^n}$ with $\mathrm{Tr}_2(\alpha)$ arbitrary in $\mathbb F_{q^2}$ and $\mathrm{Tr}_2(b\alpha)=(a\mathrm{Tr}_2(\alpha))^{2^{l-k}}$, yielding an element of $\ker(\mathrm{Tr}_2\circ L^\prime)\setminus\ker\mathrm{Tr}$. Assume in addition that $b\in\mathbb F_{q^2}$. Then
\[\mathrm{Tr}_2(L^\prime(x))^{2^l}=(a\mathrm{Tr}_2(x))^{2^{l-k}}+b\mathrm{Tr}_2(x),\]
which indicates that $\ker(\mathrm{Tr}_2\circ L^\prime)\subseteq\ker\mathrm{Tr}$ if and only if the kernel of $(ax)^{2^{l-k}}+bx$ in $\mathbb F_{q^2}$ is contained in $\mathbb F_q$. If $a^\frac{q^2-1}{2^e-1}\ne b^\frac{q^2-1}{2^e-1}$, then the kernel is $\{0\}$; otherwise, it is $\gamma\mathbb F_{2^e}$ for some $\gamma\in\mathbb F_{q^2}^*$ such that $\gamma^{2^{l-k}-1}=a^{-2^{l-k}}b$. In the latter case, $\gamma\mathbb F_{2^e}\subseteq\mathbb F_q$ if and only if $e$ divides $m$ and $\gamma\in\mathbb F_q$; i.e., $e$ divides $m$ and $\big(a^{-2^{l-k}}b\big)^\frac{q-1}{2^e-1}=1$, as easily seen.

Finally, assuming $a\notin\mathbb F_{q^2}$, we prove that $1=\mu a+\mu^qb$ for some $\mu\in\mathbb F_{q^2}$ if and only if $\delta=a^{q^2}b+ab^{q^2}\ne0$ and
\[\delta^q\big(a^{q^2}+a\big)+\delta\big(b^{q^3}+b^q\big)=\delta^q\big(b^{q^2}+b\big)+\delta\big(a^{q^3}+a^q\big)=0.\]
If $1=\mu a+\mu^qb$ for some $\mu\in\mathbb F_{q^2}$, then
\[\begin{pmatrix}a&b\\a^{q^2}&b^{q^2}\end{pmatrix}\begin{pmatrix}\mu\\\mu^q\end{pmatrix}=\begin{pmatrix}1\\1\end{pmatrix}\]
and $\delta\ne0$ (otherwise, $a^{-1}b\in\mathbb F_{q^2}$ and $\mu a+\mu^qb=(\mu+\mu^qa^{-1}b)a\notin\mathbb F_{q^2}$), so that
\[\begin{pmatrix}\mu\\\mu^q\end{pmatrix}=\begin{pmatrix}a&b\\a^{q^2}&b^{q^2}\end{pmatrix}^{-1}\begin{pmatrix}1\\1\end{pmatrix}=\delta^{-1}\begin{pmatrix}b^{q^2}&b\\a^{q^2}&a\end{pmatrix}\begin{pmatrix}1\\1\end{pmatrix}=\delta^{-1}\begin{pmatrix}b^{q^2}+b\\a^{q^2}+a\end{pmatrix}.\]
This implies
\[\frac{b^{q^3}+b^q}{\delta^q}=\mu^q=\frac{a^{q^2}+a}\delta,\]
and
\[\frac{a^{q^3}+a^q}{\delta^q}=\mu^{q^2}=\mu=\frac{b^{q^2}+b}\delta.\]
The converse is apparent by letting $\mu=\delta^{-1}\big(b^{q^2}+b)$.
\end{proof}

\begin{corollary}
Let $L(x)=ax^{2^k}+bx^{2^l}$ be nonzero with $a,b\in\mathbb F_{q^n}$ and $k,l$ be integers with $0\le k<l<mn$. Denote $d=\gcd(l-k,mn)$, $e=\gcd(l-k,2m)$ and $\delta=a^{q^2}b+ab^{q^2}$. Then $\mathrm{Tr}(x^{q+1})+L(x)$ is a permutation polynomial of $\mathbb F_{q^n}$ if and only if $a^\frac{q^n-1}{2^d-1}\ne b^\frac{q^n-1}{2^d-1}$ and one of the following conditions holds:
\begin{itemize}
\item $l-k\equiv m\pmod{2m}$, $a\notin\mathbb F_{q^2}$, $\delta\ne0$ and
\[\delta^q\big(a^{q^2}+a\big)+\delta\big(b^{q^3}+b^q\big)=\delta^q\big(b^{q^2}+b\big)+\delta\big(a^{q^3}+a^q\big)=0;\]
\item $k\not\equiv l\pmod{2m}$, $a,b\in\mathbb F_{q^2}$ and $a^\frac{q^2-1}{2^e-1}\ne b^\frac{q^2-1}{2^e-1}$;
\item $k\equiv l\pmod{2m}$ and $a+b\in\mathbb F_{q^2}$.
\end{itemize}
\end{corollary}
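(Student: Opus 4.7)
The plan is to combine the preceding theorem with the preceding proposition. By the theorem, $\Tr(x^{q+1})+L(x)$ is a permutation polynomial of $\mathbb{F}_{q^n}$ if and only if $\ker L=\{0\}$ and $\ker(\Tr_2\circ L^\prime)\subseteq\ker\mathrm{Tr}$. The analysis of $\ker L$ for $2$-linear binomials in the paragraph preceding the first Proposition of Section \ref{odd} (which does not use the parity of $n$) gives $\ker L=\{0\}$ exactly when $a^{(q^n-1)/(2^d-1)}\neq b^{(q^n-1)/(2^d-1)}$, matching the first part of the stated condition. It remains to match $\ker(\Tr_2\circ L^\prime)\subseteq\ker\mathrm{Tr}$ to the three listed subcases.

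The preceding proposition is stated for $0\le k<l<2m$, so I would first reduce the indices: writing $k=k^\prime+2m\alpha$ and $l=l^\prime+2m\beta$ with $0\le k^\prime,l^\prime<2m$, the identity $x^{2^{2m}}=x^{q^2}$ on $\mathbb{F}_{q^n}$ lets one rewrite $L$ as $L_{k^\prime}(x^{2^{k^\prime}})+L_{l^\prime}(x^{2^{l^\prime}})$ with $L_{k^\prime}(z)=az^{q^{2\alpha}}$ and $L_{l^\prime}(z)=bz^{q^{2\beta}}$ (both $q^2$-linear), satisfying $L_{k^\prime}(1)=a$ and $L_{l^\prime}(1)=b$. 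Since $\gcd(l^\prime-k^\prime,2m)=\gcd(l-k,2m)=e$, the proposition's hypotheses translate unchanged.

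When $k\not\equiv l\pmod{2m}$, the proposition applies. Its first subcase---$l^\prime-k^\prime=m$, $a\notin\mathbb{F}_{q^2}$, $\delta\neq0$, plus the two displayed equations---matches corollary case 1 ($l-k\equiv m\pmod{2m}$), and its second subcase ($a,b\in\mathbb{F}_{q^2}$ with $a^{(q^2-1)/(2^e-1)}\neq b^{(q^2-1)/(2^e-1)}$) matches corollary case 2. The main obstacle is the proposition's third subcase, $a,b\in\mathbb{F}_{q^2}^*$ with $a^{2^{l^\prime-k^\prime}(q-1)/(2^e-1)}=b^{(q-1)/(2^e-1)}$: I would show it is incompatible with $\ker L=\{0\}$, by verifying that for $a,b\in\mathbb{F}_{q^2}^*$ this equality forces $a^{(q^n-1)/(2^d-1)}=b^{(q^n-1)/(2^d-1)}$ (using the factorization $q^n-1=(q^2-1)(1+q^2+\cdots+q^{n-2})$ together with the divisibilities $e\mid d$ and $e\mid m$), so it never contributes a permutation polynomial.

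When $k\equiv l\pmod{2m}$ (corollary case 3), which is outside the proposition's range, I would compute $\Tr_2(L^\prime(x))$ directly. Writing $l-k=2m\gamma$ gives $L^\prime(x)=(ax+b^{q^{-2\gamma}}x^{q^{-2\gamma}})^{2^{-k}}$; using that $\Tr_2$ commutes with squaring and with $x\mapsto x^{q^j}$ and that $\Tr_2(bx)\in\mathbb{F}_{q^2}$ is fixed by $x\mapsto x^{q^{-2\gamma}}$, this collapses to $\Tr_2(L^\prime(x))=\Tr_2((a+b)x)^{2^{-k}}$. Hence $\ker(\Tr_2\circ L^\prime)=\mathbb{F}_{q^n}$ when $a+b=0$ (a case excluded by $\ker L=\{0\}$) and equals $(a+b)^{-1}\ker\Tr_2$ otherwise. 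Since $(\ker\Tr_2)^\perp=\mathbb{F}_{q^2}$ under the trace pairing $(x,y)\mapsto\Tr(xy)$, the inclusion $(a+b)^{-1}\ker\Tr_2\subseteq\ker\Tr$ holds precisely when $a+b\in\mathbb{F}_{q^2}$, yielding corollary case 3.
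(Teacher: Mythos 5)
Your proposal is correct and follows essentially the same route as the paper: combine the theorem of Section \ref{even} with the preceding proposition, identify $\ker L=\{0\}$ with the condition $a^\frac{q^n-1}{2^d-1}\ne b^\frac{q^n-1}{2^d-1}$, keep the proposition's first two subcases, and discard its third as incompatible with $\ker L=\{0\}$. You in fact supply details the paper's one-sentence proof leaves implicit (the reduction of the exponents modulo $2m$ and the direct computation $\Tr_2(L^\prime(x))=\Tr_2((a+b)x)^{2^{-k}}$ when $k\equiv l\pmod{2m}$); the only divergence is that the paper rules out the third subcase by exhibiting a nonzero element of $\ker L$ rather than by verifying the power identity, which amounts to the same thing.
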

\begin{proof}
It suffices to check the conditions in the last proposition in the case that $a^\frac{q^n-1}{2^d-1}\ne b^\frac{q^n-1}{2^d-1}$. The first two conditions there are obvious, and the third fails to hold because in that case, the polynomial $(ax)^{2^{l-k}}+bx$ over $\mathbb F_{q^2}$ does not permute $\mathbb F_{q^2}$, and neither does $ax^{2^k}+bx^{2^l}$.
\end{proof}

For $\mathrm{Tr}(x^{q+1})+L(x)$ to be a permutation polynomial of $\mathbb F_{q^n}$, it has been shown that $L$ necessarily has an inverse map on $\mathbb F_{q^n}$. It turns out that the problem involves the coefficients of the polynomial $L^{-1}$ as its inverse.

\begin{proposition}
Suppose that $L$ has an inverse map $L^{-1}$ on $\mathbb F_{q^n}$ written as
\[L^{-1}(x)=\sum_{i=0}^{m-1}L_i\big(x^{2^i}\big),\]
where $L_i$ is a $q$-linear polynomial over $\mathbb F_{q^n}$. Then $\mathrm{Tr}(x^{q+1})+L(x)$ is a permutation polynomial of $\mathbb F_{q^n}$, if and only if $L_i(1)\in\mathbb F_{q^2}$ for $0\le i<m$, if and only if $\mathbb F_q\subseteq L(\mathbb F_{q^2})$.
\end{proposition}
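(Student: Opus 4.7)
The plan is to invoke the theorem established earlier in this section. Since $L$ is invertible, $\ker L=\{0\}$, so the criterion reduces to $\ker(\Tr_2\circ L^\prime)\subseteq\ker\mathrm{Tr}$. Properties (iv) and (v) of adjoints give $(L^{-1})^\prime=(L^\prime)^{-1}$ and $\ker L^\prime=\{0\}$, so substituting $x=(L^{-1})^\prime(y)$ recasts the condition as $\Tr((L^{-1})^\prime(y))=0$ for every $y\in\ker\mathrm{Tr}_2$.

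Next I would reuse the direct computation from the proof of Lemma \ref{l}, which derives the identity
\[\Tr((L^{-1})^\prime(y))=\sum_{i=0}^{m-1}\Tr(L_i(1)y)^{2^{m-i}}\]
by formal expansion, without any assumption on $L_i(1)$. Using $\ker\mathrm{Tr}_2=\{z^{q^2}+z:z\in\mathbb F_{q^n}\}$ together with Frobenius invariance $\Tr(w^{q^2})=\Tr(w)$, the condition becomes
\[\sum_{i=0}^{m-1}\Tr(c_iz)^{2^{m-i}}=0\qquad\text{for all }z\in\mathbb F_{q^n},\]
with $c_i=L_i(1)+L_i(1)^{q^{-2}}$. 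Expanding $\Tr(c_iz)=\sum_{k=0}^{n-1}c_i^{q^k}z^{q^k}$ and raising to $2^{m-i}$, the monomials in $z$ have exponents $q^k2^{m-i}=2^{(k+1)m-i}$; as $(k,i)$ ranges over $\{0,\dots,n-1\}\times\{0,\dots,m-1\}$ these exponents cover $\{1,2,\dots,mn\}$, so modulo $mn$ they enumerate $\{0,1,\dots,mn-1\}$ bijectively. The resulting polynomial therefore has degree less than $q^n$ with pairwise distinct Frobenius monomials, and it vanishes on $\mathbb F_{q^n}$ exactly when every $c_i=0$, i.e., $L_i(1)\in\mathbb F_{q^2}$ for $0\le i<m$. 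This settles the first equivalence.

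For the second equivalence, $q$-linearity of each $L_i$ gives $L_i(c)=cL_i(1)$ for $c\in\mathbb F_q$, so
\[L^{-1}(c)=\sum_{i=0}^{m-1}L_i(c^{2^i})=\sum_{i=0}^{m-1}c^{2^i}L_i(1)\qquad(c\in\mathbb F_q).\]
Since $L$ is a bijection, $\mathbb F_q\subseteq L(\mathbb F_{q^2})$ is equivalent to $L^{-1}(\mathbb F_q)\subseteq\mathbb F_{q^2}$, i.e., $L^{-1}(c)^{q^2}=L^{-1}(c)$ for every $c\in\mathbb F_q$. Expanding this and using $c^{q^2}=c$ yields $\sum_{i=0}^{m-1}c^{2^i}(L_i(1)+L_i(1)^{q^2})=0$ on $\mathbb F_q$, and as a polynomial in $c$ of degree at most $2^{m-1}<q$ vanishing on $\mathbb F_q$ it must be identically zero, forcing $L_i(1)\in\mathbb F_{q^2}$ for every $i$; the converse is immediate.

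The main obstacle I anticipate lies in the polynomial-identity step inside the middle paragraph: one must verify that the $mn$ values $(k+1)m-i$ are pairwise distinct modulo $mn$, so that no cancellation can occur among the Frobenius monomials $z^{2^j}$. Once that short bookkeeping is done, both directions of the first equivalence collapse to elementary linear-algebra observations and the second equivalence is a Vandermonde-style short-degree argument.
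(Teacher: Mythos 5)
Your proof is correct and follows essentially the same route as the paper's: both reduce to the criterion $\ker(\Tr_2\circ L^\prime)\subseteq\ker\mathrm{Tr}$ from the preceding theorem, rewrite it via $(L^\prime)^{-1}=(L^{-1})^\prime$ as the identical vanishing of $\Tr\big((L^{-1})^\prime\big(x^{q^2}+x\big)\big)$ followed by a comparison of Frobenius monomials, and settle the second equivalence through the correspondence between maps from $\mathbb F_q$ to $\mathbb F_{q^2}$ and polynomials of degree less than $q$. Your explicit verification that the exponents $(k+1)m-i$ are pairwise distinct modulo $mn$ merely makes precise the paper's appeal to the reduced polynomial having degree less than $q^n$.
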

\begin{proof}
For $\alpha\in\mathbb F_{q^n}$, $\mathrm{Tr}_2(L^\prime(\alpha))=0$ if and only if $\alpha\in(L^\prime)^{-1}(\ker\mathrm{Tr}_2)$. Then $\ker(\mathrm{Tr}_2\circ L^\prime)=(L^\prime)^{-1}(\ker\mathrm{Tr}_2)$, and $(L^\prime)^{-1}(\ker\mathrm{Tr}_2)\subseteq\ker\mathrm{Tr}$ if and only if
\[\begin{split}0&=\mathrm{Tr}\big((L^\prime)^{-1}\big(x^{q^2}+x\big)\big)\\&=\mathrm{Tr}\big((L^{-1})^\prime\big(x^{q^2}+x\big)\big)\\&=\sum_{i=0}^{m-1}\mathrm{Tr}\big(L_i(1)\big(x^{q^2}+x\big)\big)^{2^{-i}}\\&=\sum_{i=0}^{m-1}\mathrm{Tr}\big(\big(L_i(1)^{q^{n-2}}+L_i(1)\big)x\big)^{2^{-i}}.\end{split}\]
If $L_i(1)\in\mathbb F_{q^2}$ for $0\le i<m$, then $L^{-1}(\mathbb F_q)\subseteq\mathbb F_{q^2}$ since
\[L^{-1}(x)\equiv\sum_{i=0}^{m-1}L_i(1)x^{2^i}\pmod{x^q+x}.\]
The converse is also true, as a result of the one-to-one correspondence between the polynomials over $\mathbb F_{q^2}$ of degree less than $q$ and the maps from $\mathbb F_q$ to $\mathbb F_{q^2}$.
\end{proof}

\begin{corollary}
Assume that $\mathrm{Tr}(x^{q+1})+L(x)$ is a permutation polynomial of $\mathbb F_{q^n}$ and $\lambda$ is a $2$-linear permutation polynomial of $\mathbb F_{q^n}$. If $\lambda(\mathbb F_{q^2})=\mathbb F_{q^2}$, then both $\mathrm{Tr}(x^{q+1})+\lambda(x)$ and $\mathrm{Tr}(x^{q+1})+L(\lambda(x))$ are permutation polynomials of $\mathbb F_{q^n}$. If $\lambda(\mathbb F_q)=\mathbb F_q$, then $\mathrm{Tr}(x^{q+1})+\lambda(L(x))$ is a permutation polynomial of $\mathbb F_q$.
\end{corollary}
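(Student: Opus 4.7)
The plan is to reduce everything to the immediately preceding proposition, which characterizes $\Tr(x^{q+1})+M(x)$ as a permutation polynomial of $\mathbb F_{q^n}$ (for $M$ an invertible $2$-linear polynomial) by the single image condition $\mathbb F_q\subseteq M(\mathbb F_{q^2})$. For the three polynomials in the corollary, the relevant ``$M$'' will be $\lambda$, $L\circ\lambda$, and $\lambda\circ L$ respectively, and I will verify the image condition in each case.

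First I record what the hypotheses give. By the earlier theorem in this section, $\Tr(x^{q+1})+L(x)$ being a permutation polynomial forces $\ker L=\{0\}$, so $L$ has an inverse map on $\mathbb F_{q^n}$; and $\lambda$ is invertible by assumption. Hence all three compositions $\lambda$, $L\circ\lambda$, $\lambda\circ L$ are invertible $2$-linear maps to which the preceding proposition applies. Applying that proposition to $L$ itself yields the standing fact $\mathbb F_q\subseteq L(\mathbb F_{q^2})$, which will be used in the last two cases.

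Next I handle each case. Assume $\lambda(\mathbb F_{q^2})=\mathbb F_{q^2}$. For $M=\lambda$ the image condition reads $\mathbb F_q\subseteq\lambda(\mathbb F_{q^2})=\mathbb F_{q^2}$, which holds automatically. For $M=L\circ\lambda$ one computes
\[(L\circ\lambda)(\mathbb F_{q^2})=L(\lambda(\mathbb F_{q^2}))=L(\mathbb F_{q^2})\supseteq\mathbb F_q.\]
Now instead assume $\lambda(\mathbb F_q)=\mathbb F_q$. For $M=\lambda\circ L$,
\[(\lambda\circ L)(\mathbb F_{q^2})=\lambda(L(\mathbb F_{q^2}))\supseteq\lambda(\mathbb F_q)=\mathbb F_q,\]
where the inclusion uses $\mathbb F_q\subseteq L(\mathbb F_{q^2})$ together with the fact that $\lambda$ is a bijection (hence preserves set inclusions). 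In each case the preceding proposition then concludes that the given polynomial is a permutation polynomial of $\mathbb F_{q^n}$ (the final ``$\mathbb F_q$'' in the statement is evidently a typo for $\mathbb F_{q^n}$).

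There is essentially no obstacle, as the corollary is a direct monotonicity consequence of the image criterion $\mathbb F_q\subseteq M(\mathbb F_{q^2})$. The only point worth underlining is why the two hypotheses on $\lambda$ are stated asymmetrically: the assertions involving $\lambda$ or $L\circ\lambda$ require $\lambda$ to preserve $\mathbb F_{q^2}$ because $\mathbb F_{q^2}$ sits inside the image we must control, while for $\lambda\circ L$ it is enough that $\lambda$ preserve $\mathbb F_q$, since $\mathbb F_q$ is what we must recover after passing through $\lambda$ last.
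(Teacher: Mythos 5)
Your proposal is correct and is exactly the intended argument: the paper states this corollary without proof as an immediate consequence of the preceding proposition, whose criterion $\mathbb F_q\subseteq M(\mathbb F_{q^2})$ you verify for $M=\lambda$, $L\circ\lambda$, and $\lambda\circ L$ after noting that invertibility of each composite follows from $\ker L=\{0\}$ (forced by the earlier theorem) and the permutation hypothesis on $\lambda$. Your observation that the final ``$\mathbb F_q$'' in the statement should read ``$\mathbb F_{q^n}$'' is also right.
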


\section{Conclusions}

We have studied permutation polynomials over finite fields of even characteristic, specifically those of the form $\mathrm{Tr}(Ax^{q+1})+L(x)$. Employing character sums, we have established necessary and sufficient conditions for these polynomials to be permutation polynomials of $\mathbb F_{q^n}$. In addition, the special cases where $L$ is a monomial or binomial are discussed in detail. For further research, one may explore more different forms of $2$-linear polynomials satisfying the specific conditions, or the properties of those permutation polynomials as elements in the symmetric group on $\mathbb F_{q^n}$.


\begin{thebibliography}{10}

\bibitem{bhattacharya2017some}
S.~Bhattacharya and S.~Sarkar.
\newblock On some permutation binomials and trinomials over $\mathbb F_{2^n}$.
\newblock {\em Designs, Codes and Cryptography}, 82:149--160, 2017.

\bibitem{blokhuis2001permutations}
A.~Blokhuis, R.~S. Coulter, M.~Henderson, and C.~M. O'Keefe.
\newblock Permutations amongst the Dembowski-Ostrom polynomials.
\newblock In {\em Finite Fields and Applications: Proceedings of The Fifth
  International Conference on Finite Fields and Applications}, pages 37--42.
  Springer, 2001.

\bibitem{coulter1999evaluation}
R.~S. Coulter.
\newblock On the evaluation of a class of Weil sums in characteristic 2.
\newblock {\em New Zealand Journal of Mathematics}, 28(2):171--184, 1999.

\bibitem{ding2015permutation}
C.~Ding, L.~Qu, Q.~Wang, J.~Yuan, and P.~Yuan.
\newblock Permutation trinomials over finite fields with even characteristic.
\newblock {\em SIAM Journal on Discrete Mathematics}, 29(1):79--92, 2015.

\bibitem{gupta2016some}
R.~Gupta and R.~Sharma.
\newblock Some new classes of permutation trinomials over finite fields with
  even characteristic.
\newblock {\em Finite Fields and Their Applications}, 41:89--96, 2016.

\bibitem{li2017new}
K.~Li, L.~Qu, and X.~Chen.
\newblock New classes of permutation binomials and permutation trinomials over
  finite fields.
\newblock {\em Finite Fields and Their Applications}, 43:69--85, 2017.

\bibitem{tu2014several}
Z.~Tu, X.~Zeng, and L.~Hu.
\newblock Several classes of complete permutation polynomials.
\newblock {\em Finite Fields and Their Applications}, 25:182--193, 2014.

\bibitem{wang2018six}
Y.~Wang, W.~Zhang, and Z.~Zha.
\newblock Six new classes of permutation trinomials over $\mathbb F_{2^n}$.
\newblock {\em SIAM Journal on Discrete Mathematics}, 32(3):1946--1961, 2018.

\bibitem{wu2013linearized}
B.~Wu and Z.~Liu.
\newblock Linearized polynomials over finite fields revisited.
\newblock {\em Finite Fields and Their Applications}, 22:79--100, 2013.

\bibitem{zha2017further}
Z.~Zha, L.~Hu, and S.~Fan.
\newblock Further results on permutation trinomials over finite fields with
  even characteristic.
\newblock {\em Finite Fields and Their Applications}, 45:43--52, 2017.

\end{thebibliography}
\end{document}